    \newcommand\ed[1]{\todo[color=black!10]{#1}}
    \newcommand\lc[1]{\todo[color=red!10]{#1}}
    \newcommand\edi[1]{\todo[color=black!10,inline]{#1}}
    \newcommand\lci[1]{\todo[color=red!10,inline]{#1}}
\newcommand\ed[1]{} \newcommand\edi[1]{} \newcommand\lci[1]{} \newcommand\lc[1]{}}
\newcommand{\revrev}[1]{#1}
\DeclareFontFamily{U}{mathx}{\hyphenchar\font45}
\DeclareFontShape{U}{mathx}{m}{n}{
      <5> <6> <7> <8> <9> <10>
      <10.95> <12> <14.4> <17.28> <20.74> <24.88>
      mathx10
      }{}
\DeclareSymbolFont{mathx}{U}{mathx}{m}{n}
\DeclareMathAccent{\widecheck}{0}{mathx}{"71}
\DeclareMathAccent{\wideparen}{0}{mathx}{"75}
\newcommand{\TheTitle}{Fast Low-Rank Kernel Matrix Factorization using Skeletonized Interpolation}
\newcommand{\TheAuthors}{L.\ Cambier, E. Darve}
\title{{\TheTitle}}
\author{
    L{\'e}opold Cambier\thanks{Institute for Computational \& Mathematical Engineering, Stanford University. Huang Engineering Center, 475 Via Ortega, Suite B060, Stanford, CA-94305, USA. \email{lcambier@stanford.edu}, \url{https://stanford.edu/\~lcambier}}
  \and
Eric Darve\thanks{Department of Mechanical Engineering, Stanford University. 452 Escondido Mall, Bldg 520, Room 125, Stanford, CA-94305, USA. \email{darve@stanford.edu}}
}
\newcommand{\R}{\mathbb{R}}
\newcommand{\OO}[1]{\mathcal{O}\!\left(#1\right)}
\newcommand{\dif}{\text{d}}
\newcommand{\Xspace}{\mathcal{X}}
\newcommand{\Yspace}{\mathcal{Y}}
\newcommand{\Xcheb}{\overline X}
\newcommand{\Ycheb}{\overline Y}
\newcommand{\Xcheba}{\widehat X}
\newcommand{\Ycheba}{\widehat Y}
\newcommand{\Xchebb}{\widecheck X}
\newcommand{\Ychebb}{\widecheck Y}
\newcommand{\Xtilde}{\widetilde X}
\newcommand{\Ytilde}{\widetilde Y}
\newcommand{\Eint}{E_{\textrm{INT}}}
\newcommand{\Eqr}{E_{\textrm{QR}}}
\newcommand{\Kfun}{\mathscr{K}}
\newcommand{\KfunI}{\overline{\mathscr{K}}}
\newcommand{\afun}{a}
\newcommand{\ufun}{u}
\newcommand{\ffun}{f}
\newcommand{\nx}{{\overline{m}}}
\newcommand{\ny}{{\overline{n}}}
\newcommand{\Shat}{\widehat{S}}
\newcommand{\That}{\widehat{T}}
\newcommand{\Wx}{\overline{W}_X}
\newcommand{\Wy}{\overline{W}_Y}
\newcommand{\DWx}{\diag(\Wx)}
\newcommand{\DWy}{\diag(\Wy)}
\newcommand{\DWxa}{\diag(\widehat W_X)}
\newcommand{\DWya}{\diag(\widehat W_Y)}
\newcommand{\Kwhat}{\hat{K}_w}
\DeclareMathOperator{\diag}{diag}
\begin{document}

\maketitle

\begin{abstract} 
Integral equations are commonly encountered when solving complex physical problems. Their discretization leads to a dense kernel matrix that is block or hierarchically low-rank. This paper proposes a new way to build a low-rank factorization of those low-rank blocks at a nearly optimal cost of $\OO{n r}$ for a $n \times n$ block submatrix of rank $r$. This is done by first sampling the kernel function at new interpolation points, then selecting a subset of those using a CUR decomposition and finally using this reduced set of points as pivots for a RRLU-type factorization. We also explain how this implicitly builds an optimal interpolation basis for the Kernel under consideration. We show the asymptotic convergence of the algorithm, explain its stability and demonstrate on numerical examples that it performs very well in practice, allowing to obtain rank nearly equal to the optimal rank at a fraction of the cost of the naive algorithm.
\end{abstract}

\begin{keywords} Low-rank, Kernel, Skeletonization, Interpolation, Rank-revealing QR, Chebyshev
\end{keywords}

\begin{AMS} 15-04, 15B99, 45-04, 45A05, 65F30, 65R20
\end{AMS}

\section{Introduction}

In this paper, we are interested in the low-rank approximation of kernel matrices, i.e., matrices $K_{ij}$ defined as
\[ K_{ij} = \Kfun(x_i, y_j) \]
for $x_i \in X = \{x_1, \dots, x_m\} \subseteq \Xspace$ and $y_j \in Y = \{y_1, \dots, y_n\} \subseteq \Yspace$ and where $\Kfun$ is a smooth function over $\Xspace \times \Yspace$.
A typical example is when
\[ \Kfun(x, y) = \frac 1{\|x - y\|_2} \]
and $X \subset \Xspace$ and $Y \subset \Yspace$ are two well-separated sets of points.

This kind of matrices arises naturally when considering integral equations like
\[ \afun(x) \ufun(x) + \int_{\tilde{\Yspace}} \Kfun(x, y) \ufun(y) \dif y = \ffun(x) \quad \forall x \in \tilde\Xspace \]
where the discretization leads to a linear system of the form
\begin{equation} \label{eq:ls} 
  a_i u_i + \sum_j K_{ij} u_j = f_i 
\end{equation}
where $K$ is a \emph{dense} matrix. While this linear system as a whole is usually not low-rank, one can select subsets of points $X \subset \Xspace$ and $Y \subset \Yspace$ such that $\Kfun$ is smooth over $\Xspace \times \Yspace$ and hence $\Kfun(X, Y)$ is low-rank. This corresponds to a submatrix of the complete $K$. Being able to efficiently compute a low-rank factorization of such submatrix would lead to significant computational savings. By ``smooth" we usually refer to a function with infinitely many continuous derivatives over its domain. Such a function can be well approximated by its interpolant at Chebyshev nodes for instance. 

Low-rank factorization means that we seek a factorization of $K = \Kfun(X, Y)$ as
\[ K = U S V^\top \]
where $U \in \R^{m \times r}, V \in \R^{n \times r}$, $S \in \R^{r \times r}$, and $r$ is the rank. In that factorization, $U$ and $V$ don't necessarily have to be orthogonal. One way to compute such a factorization is to first compute the matrix $K$ at a cost $\OO{m n}$ and then to perform some rank-revealing factorization like SVD, rank-revealing QR or rank-revealing LU at a cost usually proportional to $\OO{m n r}$. But, even though the resulting factorization has a storage cost of $\OO{(m + n) r}$, linear in the size of $X$ and $Y$, the cost would be proportional to $\OO{m n}$, i.e., quadratic.

\subsection{Notation}

In the following, we will denote by $\Kfun$ a function over $\Xspace \times \Yspace$. $X$ and $Y$ are finite sequences of vectors such that $X \subset \Xspace$ and $Y \subset \Yspace$ and $\Kfun(X, Y)$ denotes the matrix $K_{ij} = \Kfun(x_i, y_j)$. Small-case letters $x$ and $y$ denote arbitrary variables, while capital-case letters $\Xcheb$, $\Xcheba$, $\Xchebb$, $\Xtilde$ denotes sequences of vectors. We denote matrices like $A(X, Y)$ when the rows refer to the set $X$ and the columns to the set $Y$.
\autoref{tab:notation} summarizes all the symbols used in this paper.

\begin{table}
    \centering
    \begin{tabular}{lp{250pt}}
        \toprule
        $\Kfun$ & The smooth kernel function \\
        $\Xspace, \Yspace$ & The spaces over which $\Kfun$ is defined, i.e., $\Xspace \times \Yspace$ \\
        $x$, $y$ & Variables, $x \in \Xspace$, $y \in \Yspace$ \\
        $X$, $Y$ & The mesh of points over which to approximate $\Kfun$, i.e., $X \times Y$ \\
        $K$ & The kernel matrix, $K = \Kfun(X, Y)$, $K_{ij} = \Kfun(x_i, y_j)$ \\
        $m$, $n$ & $m = |X|$, $n = |Y|$ \\
        $\Xcheb$, $\Ycheb$ & The tensor grids of Chebyshev points \\
        $\Xcheba$, $\Ycheba$ & The subsets of $\Xcheb$ and $\Ycheb$ output by the algorithm used to build the low-rank approximation \\
        $\nx$, $\ny$ & The number of Chebyshev tensor nodes, $\nx = |\Xcheb|$, $\ny = |\Ycheb|$ \\
        $r_0$ & The ``interpolation" rank of $\Kfun$, i.e., $r_0 = \min(|\Xcheb|, |\Ycheb|)$ \\
        $r_1$ & The Skeletonized Interpolation rank of $\Kfun$, i.e.,  $r_1 = |\Xcheba| = |\Ycheba|$ \\
        $r$ & The rank of the continuous SVD of $\Kfun$ \\
        $S(x, \Xcheb)$, $T(y, \Ycheb)$ & Row vectors of the Lagrange basis functions, based on $\Xcheb$ and $\Ycheb$ and evaluated at $x$ and $y$, respectively. Each column is one Lagrange basis function. \\
        $\Shat(x, \Xcheba)$, $\That(y, \Ycheba)$ & Row vectors of Lagrange basis functions, based on $\Xcheba$ and $\Ycheba$, built using the Skeletonized Interpolation  and evaluated at $x$ and $y$, respectively. Each column is one function.\\
        $w_k$, $w_l$ & Chebyshev integration weights \\
        $\diag(\Wx)$,  $\diag(\Wy)$ & Diagonal matrices of integration weights when integration is done at nodes $\Xcheb$ and $\Ycheb$ \\
        \bottomrule
    \end{tabular}
    \caption{Notations used in the paper}
    \label{tab:notation}
\end{table}

\subsection{Previous work}

The problem of efficiently solving \eqref{eq:ls} has been extensively studied in the past. As indicated above, discretization often leads to a dense matrix $K_{ij}$. Hence, traditional techniques such as the LU factorization cannot be applied because of their $\OO{n^3}$ time and even $\OO{n^2}$ storage complexity.
The now traditional method used to deal with such matrices is to use the fact that they usually present a (hierarchically) low-rank structure, meaning we can represent the matrix as a hierarchy of low-rank blocks. The Fast Multipole Method (FMM) \cite{rokhlin1985rapid, greengard1987fast, barnes1986hierarchical} takes advantage of this fact to accelerate computations of matrix-vector products $Kv$ and one can then couple this with an iterative method.
More recently, \cite{fong2009black} proposed a kernel-independent FMM based on interpolation of the kernel function.

Other techniques compute explicit low-rank factorization of blocks of the kernel matrix through approximation of the kernel function.
The Panel Clustering method \cite{hackbusch1989fast} first computes a low-rank approximation of $\Kfun(x, y)$ as
\[ \Kfun(x, y) \approx \sum_i \kappa_i(x ; y_0) \phi_i(y) \]
by Taylor series and then uses it to build the low-rank factorization.

Bebendorf and Rjasanow proposed the Adaptive Cross Approximation \cite{bebendorf2003adaptive}, or ACA, as a technique to efficiently compute low-rank approximations of kernel matrices. ACA has the advantage of only requiring to evaluate rows or columns of the matrix and provides a simple yet very effective solution for smooth kernel matrix approximations. However, it can have convergence issues in some situations (see for instance \cite{borm2005hybrid}) if it cannot capture all necessary information to properly build the low-rank basis and lacks convergence guarantees.

In the realm of analytic approximations, \cite{yarvin1998generalized} (and similarly \cite{borm2004low}, \cite{borm2005hybrid}, \cite{fong2009black} and \cite{wu2014optimized} in the Fourier space) interpolate $\Kfun(x, y)$ over $\Xspace \times \Yspace$ using classical interpolation methods (for instance, polynomial interpolation at Chebyshev nodes in \cite{fong2009black}), resulting in expressions like
\[ \Kfun(x, y) \approx S(x, \Xtilde) \Kfun(\Xtilde, \Ytilde) T(y, \Ytilde)^\top = \sum_{k} \sum_{l} S_k(x) \Kfun(\tilde x_k, \tilde y_l) T_l(y) \]
where $S$ and $T$ are Lagrange interpolation basis functions.
Those expressions can be further recompressed by performing a rank-revealing factorization on the node matrix $\Kfun(\Xtilde, \Ytilde)$, for instance using SVD \cite{fong2009black} or ACA \cite{borm2005hybrid}.
Furthermore, \cite{yarvin1998generalized} takes the SVD of a scaled $\Kfun(\Xtilde, \Ytilde)$ matrix to further recompress the approximation and obtain an explicit expression for $u_r$ and $v_r$ such that
\[ \Kfun(x, y) \approx \sum_s \sigma_s u_s(x) v_s(y) \]
where $\{u_s\}_s$ and $\{v_s\}_s$ are sequences of orthonormal functions in the usual $L_2$ scalar product.

Bebendorf \cite{bebendorf2000approximation} builds a low-rank factorization of the form
\begin{equation} \label{eq:HCA} \Kfun(x, y) = \Kfun(x, \Ytilde) \Kfun(\Xtilde, \Ytilde)^{-1} \Kfun(\Xtilde, y) \end{equation}
where the nodes $\Xtilde$ and $\Ytilde$ are interpolation nodes of an interpolation of $\Kfun(x, y)$ built iteratively.  
Similarly, in their second version of the Hybrid cross approximation algorithm, B{\" o}rm and Grasedyck \cite{borm2005hybrid} propose applying ACA to the kernel matrix evaluated at interpolation nodes to obtain pivots $\Xtilde_{i}$, $\Ytilde_{j}$, and implicitly build an approximation of the form given in \autoref{eq:HCA}. Both those algorithms resemble our approach in that they compute pivots $\Xtilde, \Ytilde$ in some way and then use \autoref{eq:HCA} to build the low-rank approximation. In contrast, our algorithm uses weights, and has stronger accuracy guarantees. We highlight those differences in \autoref{sec:exp}.

Our method inserts itself amongst those low-rank kernel factorization techniques. However, with the notable exception of ACA, those methods often either rely on analytic expressions for the kernel function (and are then limited to some specific ones), or have suboptimal complexities, i.e., greater than $\OO{n r}$. In addition, even though we use interpolation nodes, it is worth noting that our method differs from interpolation-based algorithm as we never explicitly build the $S(x, \Xtilde)$ and $T(y, \Ytilde)$ matrices containing the basis functions. We merely rely on their existence.

$\mathcal{H}$-matrices \cite{hackbusch1999sparse, hackbusch2002data, hackbusch2000sparse} are one way to deal with kernel matrices arising from boundary integral equations that are Hierarchically Block Low-Rank. The compression criterion (i.e., which blocks are compressed as low-rank and which are not) leads to different methods, usually denoted as strongly-admissible (only compress well-separated boxes) or weakly-admissible (compress adjacent boxes as well). In the realm of strongly-admissible $\mathcal{H}$-matrices, the technique of Ho \& Ying \cite{ho2015hierarchical} as well as Tyrtyshnikov \cite{tyrtyshnikov2000incomplete} are of particular interest for us. They use Skeletonization of the matrix to reduce storage and computation cost. In \cite{ho2015hierarchical}, they combine Skeletonization and Sparsification to keep compressing blocks of $\mathcal{H}$-matrices. \cite{tyrtyshnikov2000incomplete} uses a somewhat non-traditional Skeletonization technique to also compress hierarchical kernel matrices.

Finally, extending the framework of low-rank compression, \cite{corona2017tensor} uses tensor-train compression to re-write $\Kfun(X, Y)$ as a tensor with one dimension per coordinate, i.e., $\Kfun(x_1, \dots, x_d, y_1, \dots, y_d)$ and then compress it using the tensor-train model.

\subsection{Contribution}

\subsubsection{Overview of the method} \label{sec:overview}

In this paper, we present a new algorithm that performs this low-rank factorization at a cost proportional to $\OO{m + n}$. The main advantages of the method are as follows:
\begin{enumerate}
  \item The complexity of our method is $\OO{r(m+n)}$ (in terms of kernel function $\Kfun$ evaluations) where $r$ is the target rank.
  \item The method is robust and accurate, irrespective of the distribution of points $x$ and $y$.
  \item We can prove both convergence and numerical stability of the resulting algorithm.
  \item The method is very simple and relies on well-optimized BLAS3 (GEMM) and LAPACK (RRQR, LU) kernels.
\end{enumerate}

Consider the problem of approximating $\Kfun(x,y)$ over the mesh $X \times Y$ with $X \in \Xspace$ and $Y \in \Yspace$. Given the matrix $\Kfun(X, Y)$, one possibility to build a low-rank factorization is to do a rank-revealing LU. This would lead to the selection of
\[ X_{\text{piv}} \subset X, \qquad Y_{\text{piv}} \subset Y \]
called the ``pivots'', and the low-rank factorization would then be given by
\[ \Kfun(X, Y) \approx \Kfun(X, Y_{\text{piv}}) \Kfun(X_{\text{piv}}, Y_{\text{piv}})^{-1} \Kfun(X_{\text{piv}}, Y) \]
In practice however, this method may become inefficient as it requires assembling the matrix $\Kfun(X, Y)$ first.

In this paper, we propose and analyze a new method to select the ``pivots'' \emph{outside} of the sets $X$ and $Y$. The key advantage is that this selection is independent from the sets $X$ and $Y$, hence the reduced complexity. Let us consider the case where $\Xspace, \Yspace = [-1, 1]^d$. We will keep this assumption throughout this paper. Then, within $[-1,1]^d$, one can build tensor grids of Chebyshev points $\Xcheb, \Ycheb$ and associated integration weights $\Wx, \Wy$ and then consider the matrix
\[ K_w = \diag(\Wx)^{1/2} \Kfun(\Xcheb, \Ycheb) \diag(\Wy)^{1/2} \]
Denote $r_0 = \min(|\Xcheb|, |\Ycheb|)$. Based on interpolation properties, we will show that this matrix is closely related to the continuous kernel $\Kfun(x, y)$. In particular, they share a similar spectrum. Then, we select the sets $\Xcheba \subset \Xcheb$, $\Ycheba \subset \Ycheb$ by performing strong rank-revealing QRs \cite{gu1996efficient} over, respectively, $K_w^\top$ and $K_w$ (this is also called a CUR decomposition):
\[ K_w P_y = Q_y R_y \]
\[ K_w^\top P_x = Q_x R_x \]
and build $\Xcheba$ by selecting the elements of $P_x$ associated to the largest rows of $R_x$ and similarly for $\Ycheba$ (if they differ in size, extend the smallest). We denote the rank of this factorization $r_1 = |\Xcheba| = |\Ycheba|$, and in practice, we observe that $r_1 \approx r_{SVD}$, where $r_{SVD}$ is the rank the truncated SVD of $\Kfun(X,Y)$ would provide.
The resulting factorization is
\begin{equation} \label{eq:facto_1} \Kfun(X,Y) \approx \Kfun(X, \Ycheba) \Kfun(\Xcheba, \Ycheba)^{-1} \Kfun(\Xcheba, Y) \end{equation}
Note that, in this process, at no point did we built any Lagrange basis function associated with $\Xcheb$ and $\Ycheb$. We only evaluate the kernel $\Kfun$ at $\Xcheb \times \Ycheb$.

This method appears to be very efficient in selecting sets $\Xcheba$ and $\Ycheba$ of minimum sizes. Indeed, instead, one could aim for a simple interpolation of $\Kfun(x,y)$ over both $\Xspace$ and $\Yspace$ separately. For instance, using the regular polynomial interpolation at Chebyshev nodes $\Xcheb$ and $\Ycheb$, it would lead to a factorization of the form
\[ \Kfun(X,Y) \approx S(X,\Xcheb) \Kfun(\Xcheb,\Ycheb) T(Y,\Ycheb)^\top \]
In this expression, we collect the Lagrange basis functions (each one associated to a node of $\Xcheb$) evaluated at $X$ in the columns of $S(X,\Xcheb)$ and similarly for $T(Y,\Ycheb)$. This provides a robust way of building a low-rank approximation. The rank $r_0 = \min(|\Xcheb|,|\Ycheb|)$, however, is usually much larger than the true rank $r_{SVD}$ and than $r_1$ (given a tolerance). Note that even if those factorizations can always be further recompressed to a rank $\approx r_{SVD}$, they incur a high upfront cost because of the rank $r_0 \gg r_{SVD}$. See \autoref{sec:opt} for a discussion about this.

\subsubsection{Distinguishing features of the method}

Since there are many methods that resemble our approach, we point out its distinguishing features. The singular value decomposition (SVD) offers the optimal low-rank representation in the 2-norm. However, its complexity scales like $\OO{n^3}$. In addition, we will show that the new approach is negligibly less accurate than the SVD in most cases.

The rank-revealing QR and LU factorization, and methods of random projections~\cite{halko2011finding}, have a reduced computational cost of $\OO{n^2 r}$, but still scale quadratically with $n$.

Methods like ACA~\cite{bebendorf2003adaptive}, the rank-revealing LU factorization with rook pivoting~\cite{golub2012matrix} and techniques that randomly sample from columns and rows of the matrix scale like $\OO{n r}$, but they provide no accuracy guarantees. In fact, counterexamples can be found where these methods fail. In contrast, our approach relies on Chebyshev nodes, which offers strong stability and accuracy guarantees. The fact that new interpolation points, $\Xcheb$ and $\Ycheb$, are introduced (the Chebyshev nodes) in addition to the existing points in $X$ and $Y$ is one of the key elements.

Analytical methods are available, like the fast multipole method, etc., but they are limited to specific kernels. Other techniques, which are more general, like Taylor expansion and Chebyshev interpolation~\cite{fong2009black}, have strong accuracy guarantees and are as general as the method presented. However, their cost is much greater; in fact, the difference in efficiency is measured directly by the reduction from $r_0$ to $r_1$ in our approach.

\subsubsection{Low-rank approximation based on SVD and interpolation}

Consider the kernel function $\Kfun$ and its singular value decomposition \cite[theorem VI.17]{reed1980methods}:
\begin{theorem}[Singular Value Decomposition]
Suppose $\Kfun : [-1,1]^d \times [-1,1]^d$ is square integrable. Then there exist two sequences of orthogonal functions $\{u_i\}_{i=1}^\infty$ and $\{v_i\}_{i=1}^d$ and a non-increasing sequence of non-negative real number $\{s_i\}_{i=1}^\infty$ such that
\begin{equation} \label{eqsvd0}
  \Kfun(x, y) = \sum_{s=1}^\infty \sigma_s u_s(x) v_s(y)
\end{equation}
\end{theorem}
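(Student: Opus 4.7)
The plan is to cast the statement as the singular value decomposition of a compact integral operator, then apply the spectral theorem for compact self-adjoint operators. Concretely, I would associate to $\Kfun$ the integral operator $T \colon L^2([-1,1]^d) \to L^2([-1,1]^d)$ defined by $(Tf)(x) = \int \Kfun(x,y) f(y)\, dy$. By Cauchy--Schwarz and Fubini, the assumption $\Kfun \in L^2([-1,1]^d \times [-1,1]^d)$ gives that $T$ is well-defined and that its Hilbert--Schmidt norm equals $\|\Kfun\|_{L^2}$; in particular $T$ is Hilbert--Schmidt, hence compact. The adjoint $T^*$ has kernel $\overline{\Kfun(y,x)}$, which for a real-valued $\Kfun$ is just $\Kfun(y,x)$.

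Next I would apply the spectral theorem to the compact, self-adjoint, positive operator $T^*T$: there exists a countable orthonormal system $\{v_s\}_{s\ge 1}$ in $L^2([-1,1]^d)$, consisting of eigenfunctions of $T^*T$ with eigenvalues $\sigma_s^2$ arranged in non-increasing order and tending to $0$, such that $\{v_s\}$ spans the orthogonal complement of $\ker T$. For each $s$ with $\sigma_s>0$, define $u_s = \sigma_s^{-1} T v_s$. A direct calculation using $T^*T v_s = \sigma_s^2 v_s$ shows that $\{u_s\}$ is orthonormal in $L^2$, and one extends it arbitrarily on $\ker T^*$ if needed. This gives the two orthogonal sequences and the non-increasing non-negative sequence $\{\sigma_s\}$ required by the statement.

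Finally I would establish the representation \eqref{eqsvd0} as an $L^2$-convergent series in the product space. The key identity is that the kernel of the finite-rank operator $T_N = \sum_{s=1}^N \sigma_s \langle \cdot, v_s\rangle u_s$ is $K_N(x,y) = \sum_{s=1}^N \sigma_s u_s(x) v_s(y)$, and that subtracting this from $T$ leaves a Hilbert--Schmidt operator whose squared norm equals $\sum_{s>N} \sigma_s^2$. Combined with the identity $\|\Kfun\|_{L^2}^2 = \|T\|_{HS}^2 = \sum_s \sigma_s^2$, this yields $\|\Kfun - K_N\|_{L^2} \to 0$, i.e.\ convergence in $L^2([-1,1]^d \times [-1,1]^d)$, which is the natural sense in which \eqref{eqsvd0} holds for a general square-integrable kernel.

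The main obstacle is the final step: interpreting in what sense the series converges. For merely square-integrable $\Kfun$ one cannot expect pointwise or uniform convergence, so one must state the conclusion as convergence in $L^2$ of the product domain, and use the isometry between Hilbert--Schmidt operators and their $L^2$ kernels to transfer convergence of operators (which is clear from the spectral construction) into convergence of kernels. With that identification in hand, the rest of the argument reduces to bookkeeping with the spectral decomposition.
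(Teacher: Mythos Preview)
Your proposal is correct and follows the standard route to the singular value decomposition of a Hilbert--Schmidt integral operator. Note, however, that the paper does not supply its own proof of this statement: it is quoted as a known result with a reference to \cite[Theorem~VI.17]{reed1980methods}. The argument you outline---passing from $\Kfun\in L^2$ to a compact (Hilbert--Schmidt) operator $T$, applying the spectral theorem to $T^*T$, defining $u_s=\sigma_s^{-1}Tv_s$, and then transferring operator-norm convergence to $L^2$-convergence of the kernels via the Hilbert--Schmidt isometry---is exactly the proof one finds in that reference, so your approach is not merely correct but is essentially the cited one.
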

As one can see, under relatively mild assumptions, any kernel function can be expanded into a singular value decomposition. Hence from any kernel function expansion we find a low-rank decomposition for the matrix $\Kfun(X, Y)$ (which is \emph{not} the same as the matrix SVD):
\begin{equation} \label{eqsvd}
\Kfun(X, Y) \approx \sum_{s=1}^r u_s(X) \sigma_s v_s(Y) =
\begin{bmatrix} u_1(X) & \cdots & u_r(X) \end{bmatrix}
\begin{bmatrix} \sigma_1 & & \\ & \ddots & \\ & & \sigma_r \end{bmatrix}
\begin{bmatrix} v_1^\top(Y) \\ \vdots \\ v_r^\top(Y) \end{bmatrix}
\end{equation}
where the sequence $\{s_i\}_{i=1}^\infty$ was truncated at an appropriate index $r$. As a general rule of thumb, the smoother the function $\Kfun(x, y)$, the faster the decay of the $\sigma_s$'s and the lower the rank.

If we use a polynomial interpolation method with Chebyshev nodes, we get a similar form:
\begin{equation} \label{eqcheb}
  \Kfun(X, Y) \approx S(X, \Xcheb) \; \Kfun(\Xcheb, \Ycheb) \; T(Y, \Ycheb)^\top
\end{equation}

The interpolation functions $S(x, \Xcheb)$ and $T(y, \Ycheb)$ have strong accuracy guarantees, but the number of terms required in the expansion is $r_0 \gg r \approx r_1$. This is because Chebyshev polynomials are designed for a broad class of functions. In contrast, the SVD uses basis functions $u_s$ and $v_s$ that are optimal for the chosen $\Kfun$.

\subsubsection{Optimal interpolation methods}

\label{sec:opt}

We will now discuss a more general problem, then derive our algorithm as a special case. Let's start with understanding the optimality of the Chebyshev interpolation. With Chebyshev interpolation, $S(x,\Xcheb)$ and $T(y,\Ycheb)$ are polynomials. This is often considered one of the best (most stable and accurate) ways to interpolate smooth functions. We know that for general polynomial interpolants we have:
\begin{equation} \label{eq:err}
f(x) - S(x,\Xcheb) f(\Xcheb) = \frac{f^{(m)}(\xi)}{m!}
\prod_{j=1}^m (x-\Xcheb_j)  
\end{equation}
If we assume that the derivative $f^{(m)}(\xi)$ is bounded, we can focus on finding interpolation points such that
\[ \revrev{\prod_{j=1}^m (x-\Xcheb_j) = x^m - r_{\Xcheb}(x)} \]
is minimal, where \revrev{$r_{\Xcheb}(x)$} is a degree $m-1$ polynomial. Since we are free to vary the interpolation points \revrev{$\Xcheb$}, then we have $m$ parameters (the location of the interpolation points) and $m$ coefficients in \revrev{$r_{\Xcheb}$}. By varying the location of the interpolation points, we can recover any polynomial \revrev{$r_{\Xcheb}$}. Chebyshev points are known to solve this problem optimally. That is, they lead to an $r_{\Xcheb}$ such that \revrev{$\max_x |x^m - r_{\Xcheb}(x)|$} is minimal. 

Chebyshev polynomials are a very powerful tool because of their generality and simplicity of use. Despite this, we will see that this can be improved upon with relatively minimal effort. Let's consider the construction of interpolation formulas for a family of functions $\Kfun(x, \lambda)$, where $\lambda$ is a parameter. We would like to use the SVD, but, because of its high computational cost, we rely on the cheaper rank-revealing QR factorization (RRQR, a QR algorithm with column pivoting). RRQR solves the following optimization problem:
\[ 
\min_{ \{\lambda_s, v_s\} } \max_{\lambda} 
\Big\| \Kfun(x, \lambda) - \sum_{s=1}^m \Kfun(x, \lambda_s) v_s(\lambda) 
\Big\|_2, \qquad 
v_s(\lambda_t) = \delta_{st}
\]
where the 2-norm is computed over $x$---in addition RRQR produces an orthogonal basis for $\{\Kfun(x, \lambda_s)\}_s$ but this is not needed in the current discussion. The vector space span$\{\Kfun(x, \lambda_s)\}_{s=1,\ldots,m}$ is close to span$\{u_s\}_{s=1,\ldots,m}$ [see \autoref{eqsvd0}], and the error  can be bounded by $\sigma_{m+1}$. 

Define $\widehat \Lambda = \{\lambda_1, \dots, \lambda_m\}$.
From there, we identify a set of $m$ interpolation nodes $\Xcheba$ such that the square matrix
\[ 
\Kfun(\Xcheba, \widehat \Lambda) :=
\begin{bmatrix}
\Kfun(\Xcheba, \lambda_1) & \cdots &
\Kfun(\Xcheba, \lambda_m) 
\end{bmatrix}
 \]
is as well conditioned as possible. We now define our interpolation operator as
\[ \Shat(x, \Xcheba) = \Kfun(x, \widehat \Lambda) \Kfun(\Xcheba, \widehat \Lambda)^{-1} \]
By design, this operator is exact on $\Kfun(x, \lambda_s)$:
\[ \Shat(x, \Xcheba) \Kfun(\Xcheba, \lambda_s) = 
\Kfun(x, \lambda_s) \]
It is also very accurate for $\Kfun(x,\lambda)$ since
\[ \Shat(x, \Xcheba) \Kfun(\Xcheba,\lambda)  
\approx \sum_{s=1}^m \Shat(x, \Xcheba) \Kfun(\Xcheba, \lambda_s) v_s(\lambda)  
= \sum_{s=1}^m \Kfun(x, \lambda_s) v_s(\lambda)
\approx \Kfun(x,\lambda)
\]
With Chebyshev interpolation, $S(x, \Xcheb)$ is instead defined using order $m-1$ polynomial functions.

A special case that illustrates the difference between SI and Chebyshev, is with rank-1 kernels:
\[ \Kfun(x, \lambda) = u(x) v(\lambda) \]
In this case, we can pick any $x_1$ and $\lambda_1$ such that $\Kfun(x_1, \lambda_1) \neq 0$, and define $\Xcheba = \{x_1\}$ and
\[ \Shat(x, \Xcheba) = \Kfun(x, \lambda_1) \Kfun(x_1, \lambda_1)^{-1} \]
\[ \Shat(x, \Xcheba) \Kfun(\Xcheba, \lambda) = u(x) v(\lambda_1) \frac 1{u(x_1) v(\lambda_1)} u(x_1) v(\lambda) = u(x) v(\lambda) \]
SI is exact using a single interpolation point $x_1$. An interpolation using Chebyshev polynomials would lead to errors, for any expansion order (unless $u$ is fortuitously a polynomial).

So, one of the key differences between SI and Chebyshev interpolation is that SI uses, as basis for its interpolation, \emph{a set of nearly optimal functions that approximate the left singular functions of $\Kfun$,} rather than generic polynomial functions.

\subsubsection{Proposed method}

In this paper, we use the framework from \autoref{sec:opt} to build an interpolation operator for the class of functions $\Kfun(x,y)$, which we view as a family of functions of $x$ parameterized by $y$ (and vice versa to obtain a symmetric interpolation method). The approximation (\autoref{eq:facto_1}) can be rewritten
\[ \Kfun(X, Y) \approx
\big[ \Kfun(X, \Ycheba) \Kfun(\Xcheba, \Ycheba)^{-1} \big]
\; \Kfun(\Xcheba, \Ycheba) \;
\big[ \Kfun(\Xcheba, \Ycheba)^{-1} \Kfun(\Xcheba, Y) \big]
\]
and by comparing with \autoref{eqcheb}, we recognize the interpolation operators:
\[
  \Shat(x,\Xcheba) = \Kfun(x, \Ycheba) \Kfun(\Xcheba, \Ycheba)^{-1},
  \qquad
  \That(y,\Ycheba) = \Kfun(\Xcheba, y)^\top \Kfun(\Xcheba, \Ycheba)^{-T}
\]
These interpolation operators are nearly optimal; because of the way these operators are constructed we call the method ``Skeletonized Interpolation.'' The sets $\Xcheba$ and $\Ycheba$ are the minimal sets such that if we sample $\Kfun$ at these points we can interpolate $\Kfun$ at any other point with accuracy $\epsilon$. In particular, $\Xcheba$ and $\Ycheba$ are much smaller than their Chebyshev-interpolant counterparts $\Xcheb$ and $\Ycheb$ and their size, $r_1$, is very close to $r$ in \autoref{eqsvd}. The approach we are proposing produces nearly-optimal interpolation functions for our kernel, instead of generic polynomial functions.

Note that none of the previous discussions explains why the proposed scheme is stable; the inverse $\Kfun(\Xcheba, \widehat \Lambda)^{-1}$ as well as $\Kfun(\Xcheba, \Ycheba)^{-1}$ in \autoref{eq:facto_1} could become troublesome numerically. We will explain in detail in \autoref{sec:stab} why this is not an issue numerically, and we explore the connection with interpolation in more detail in \autoref{sec:new_interp}.

\subsubsection{Organization of the paper}

    This paper is organized as follows. In \autoref{sec:algo}, we present the algorithm in detail and present some theoretical results about its convergence. In \autoref{sec:stab}, we discuss its numerical stability and in \autoref{sec:new_interp} we revisit the interpolation interpretation on a simple example. Finally, \autoref{sec:exp} illustrates the algorithm on more complex geometries, compares its accuracy with other classical algorithms and presents computational complexity results.

\section{Skeletonized Interpolation} \label{sec:algo}

\subsection{The algorithm}

\autoref{algo:ai} provides the high-level version of the algorithm. It consists of 3 steps:
\begin{itemize}
    \item Build grids $\Xcheb$ and $\Ycheb$, tensor grids of Chebyshev nodes.
        Over $[-1,1]$ in 1D, the $\overline m$ Chebyshev nodes of the first kind are defined as
        \[ \bar x_k = \cos \left(\frac{2k-1}{2\overline{m}} \pi\right) \qquad k = 1,\dots,\overline{m} \] 
        In higher dimensions, they are defined as the tensor product of one-dimen\-sional grids.
        The number of points in every dimension should be such that
        \[ \sum_{k=1}^\nx \sum_{l=1}^\ny S_k(x) \Kfun(\bar x_k, \bar y_l) T_l(y) = S(x,\Xcheb) \Kfun(\Xcheb, \Ycheb) T(y, \Ycheb)^\top \]
        provides an $\delta$ uniform approximation over $[-1,1]^d \times [-1,1]^d$ of $\Kfun(x, y)$.
        Denote
        \[ r_0 = \min(|\Xcheb|, |\Ycheb|) \]
    \item Recompress the grid by performing a strong rank-revealing QR factorization \cite{gu1996efficient} of
    \begin{equation} \label{eq:Kww} \diag(\Wx)^{1/2} \Kfun(\Xcheb, \Ycheb) \diag(\Wy)^{1/2} \end{equation}
    and its transpose, up to accuracy $\epsilon$. This factorization is also named CUR decomposition \cite{mahoney2009cur, cheng2005compression}. While our error estimates only hold for strong rank-revealing QR \revrev{factorizations}, in practice, a simple column-pivoted QR factorization based on choosing columns with large norms works as well.
    In the case of Chebyshev nodes of the first kind in 1D over $[-1,1]$ the integration weights are given by
        \[ w_k = \frac \pi \nx \sqrt{1 - \bar x_k^2} = \frac \pi \nx \sin \left( \frac{2k-1}{2\nx}\pi \right) \] The weights in $d$ dimensions are the products of the corresponding weights in 1D, and the $\diag(\Wx)$ and $\diag(\Wy)$ matrices are simply the diagonal matrices of the integration weights. Denote
        \[ r_1 = |\Xcheba| = |\Ycheba| \]
        In case the sets $\Xcheba$ and $\Ycheba$ output by those RRQR's are of slightly different size (which we rarely noticed in our experiments), extend the smallest to have the same size as the largest.
    \item  Given $\Xcheba$ and $\Ycheba$, the low-rank approximation is given by
        \[ \Kfun(X, \Ycheba) \Kfun(\Xcheba, \Ycheba)^{-1} \Kfun(\Xcheba, Y) \]
        of rank $r_1 \approx r_{SVD}$.
\end{itemize}

\begin{algorithm}
    \begin{algorithmic}
        \Procedure{Skeletonized Interpolation}{$\Kfun : [-1,1]^d \times [-1,1]^d \to \R$, $X$, $Y$, $\epsilon$, $\delta$}
            \State Build $\Xcheb$ and $\Ycheb$, sets of Chebyshev nodes over $[-1, 1]^d$ that interpolate $\Kfun$ with error $\delta$ uniformly
            \State Build $K_w$ as
            \[ K_w = \diag(\Wx)^{1/2} \Kfun(\Xcheb, \Ycheb) \diag(\Wy)^{1/2} \]
            \State Extract $\Ycheba \subseteq \Ycheb$ by performing a strong RRQR over $K_w$ with tolerance $\epsilon$ ; 
            \[ K_w P_y = Q_y R_y \]
            \State Extract $\Xcheba \subseteq \Xcheb$ by performing a strong RRQR over $K_w^\top$ with tolerance $\epsilon$ ; 
            \[ K_w^\top P_x = Q_x R_x \]
            \State If the sets have different size, extends the smallest to the size of the largest.

            \Return
                \[ \Kfun(X, Y) \approx \Kfun(X, \Ycheba) \Kfun(\Xcheba, \Ycheba)^{-1} \Kfun(\Xcheba, Y) \]
        \EndProcedure
    \end{algorithmic}
    \caption{Skeletonized Interpolation}
    \label{algo:ai}
\end{algorithm}

\subsection{Theoretical Convergence}

\subsubsection{Overview}

In this section, we prove that the error made during the RRQR is not too much amplified when evaluating the interpolant. We first recall that
\begin{enumerate}
    \item From interpolation properties, 
    \[ \Kfun(x,y) = S(x,\Xcheb) \Kfun(\Xcheb,\Ycheb) T(y,\Ycheb)^\top + \Eint(x,y) \]
    where $T$ and $S$ are small matrices (i.e., bounded by logarithmic factors in $r_0$) and $\Eint = \OO{\delta}$.
    \item From the strong RRQR properties,  
    \[ K_w = \begin{bmatrix} I \\ \Shat \end{bmatrix} \Kwhat \begin{bmatrix} I & \That^\top \end{bmatrix} + \Eqr \]
    where $\Kwhat$ has a spectrum similar to \revrev{that} of $K_w$ (up to a small polynomial), $\Shat$ and $\That$ are bounded by a small polynomial, and $\Eqr = \OO{\epsilon}$.
\end{enumerate}
Then, by combining those two facts and assuming $\delta < \epsilon$, one can show
\begin{enumerate}
    \item First, that the interpolation operators are bounded,
    \begin{equation} \label{eq:bound_overview} \| \Kfun(x, \Ycheba) \Kfun(\Xcheba, \Ycheba)^{-1} \|_2 = \OO{p(r_0,r_1)} \end{equation}
        where $p$ is a small polynomial.
    \item Second, that the error $\epsilon$ made in the RRQR is not too much amplified, i.e., 
    \begin{equation} \label{eq:error_overview} | \Kfun(x, y) - \Kfun(x, \Ycheba) \Kfun(\Xcheba, \Ycheba)^{-1} \Kfun(\Xcheba, y) | = \OO{p'(r_0, r_1)\epsilon} \end{equation}
        where $p'$ is another small polynomial.
\end{enumerate}

Finally, if one assume that $\sigma_i(K_w)$ decays exponentially fast, so does $\epsilon$ and the resulting approximation in \autoref{eq:error_overview} converges.

In the following, we present the main lemmas (some proofs are relocated in the appendix for brevity) leading to the above result.

\subsubsection{Interpolation-related results}

We first consider the interpolation itself. Consider $\Xcheb$ and $\Ycheb$, constructed such as
    \[ \Kfun(x, y) = S(x, \Xcheb) \Kfun(\Xcheb, \Ycheb) T(y, \Ycheb)^{\top} + \Eint(x, y) \]
    \begin{lemma}[Interpolation at Chebyshev Nodes]\label{lemma:1}
        $\forall x \in \Xspace$ and $\Xcheb$ tensor grids of Chebyshev nodes of the first kind,
        \begin{gather*}
            \| S(x, \Xcheb) \|_2 = \OO{\log(|\Xcheb|)^d}
        \end{gather*}
        where $\Xspace \subset \R^d$. In addition, the weights, collected in the weight matrix $\DWx$ are such that
    \begin{gather*}
            \| \DWx^{1/2} \|_2 \leq \frac{\pi^{d/2}}{\sqrt{\nx}} = \OO{\frac 1{\sqrt{\nx}}} \\
            \| \DWx^{-1/2} \|_2 \leq \frac{\nx}{\pi^{d/2}} = \OO{\nx}
        \end{gather*}
        where $\nx = |\Xcheb|$.
    \end{lemma}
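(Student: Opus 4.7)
The plan is to prove the three bounds separately, each by a short direct argument that reduces to well-known one-dimensional facts and then tensorizes.

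For the bound on $\|S(x,\Xcheb)\|_2$, I would first pass from the $2$-norm to the $1$-norm using $\|\cdot\|_2 \leq \|\cdot\|_1$, so that the quantity to bound becomes $\sum_k |L_k(x)|$, which is precisely the Lebesgue function associated with the node set $\Xcheb$. In one dimension, the classical Lebesgue constant estimate for Chebyshev nodes of the first kind gives $\Lambda_{\overline m} = \OO{\log \overline m}$ (Rivlin's bound). For a tensor grid in dimension $d$ with $\overline m$ nodes per dimension, each $d$-dimensional Lagrange basis function factors as a product of one-dimensional basis functions, so the Lebesgue function factorizes as well, giving the product bound $\Lambda_{\overline m}^d = \OO{(\log \overline m)^d} = \OO{\log(|\Xcheb|)^d}$ since $|\Xcheb| = \overline m^d$ and $d$ is fixed.

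For the two weight bounds, I would start from the explicit one-dimensional formula $w_k = (\pi/\overline m)\sin((2k-1)\pi/(2\overline m))$. The upper bound $w_k \leq \pi/\overline m$ follows immediately from $\sin \leq 1$. For the lower bound, the smallest $\sin$ value occurs at $k=1$ and $k=\overline m$, and Jordan's inequality $\sin \theta \geq 2\theta/\pi$ for $\theta \in [0,\pi/2]$ applied to $\theta = \pi/(2\overline m)$ yields $w_k \geq \pi/\overline m^2$. Tensorizing over $d$ dimensions, a product weight $w_{k_1}\cdots w_{k_d}$ lies in the interval $[\pi^d/\overline m^{2d},\,\pi^d/\overline m^d]$, i.e.\ in $[\pi^d/\nx^2,\,\pi^d/\nx]$. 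The operator norms of the diagonal matrices $\DWx^{1/2}$ and $\DWx^{-1/2}$ are the largest and reciprocal-of-smallest entries, and taking square roots gives the stated bounds $\pi^{d/2}/\sqrt{\nx}$ and $\nx/\pi^{d/2}$.

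I do not foresee any real obstacles here; the proof is essentially a collection of standard one-dimensional estimates lifted to tensor grids. The only mildly delicate point is making sure that the Lebesgue-constant argument tensorizes cleanly (which it does because the tensor-product Lagrange basis factorizes), and remembering that $|\Xcheb| = \overline m^d$ so the logarithmic factor reads $(\log |\Xcheb|)^d$ rather than $(\log \overline m)^d$—differing only by the dimension-dependent constant $d^d$ absorbed in the $\OO{\cdot}$.
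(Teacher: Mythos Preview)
Your proposal is correct and follows essentially the same route as the paper: the Lebesgue-constant bound for one-dimensional Chebyshev nodes tensorized to $d$ dimensions, and the explicit weight formula bounded via $\sin\le 1$ and Jordan's inequality before taking products. The only cosmetic difference is that the paper tensorizes the $2$-norm first via the Kronecker-product identity $\|a\otimes b\|_2=\|a\|_2\|b\|_2$ and then bounds each one-dimensional factor by its $1$-norm, whereas you pass to the $1$-norm first and then factorize the Lebesgue function; both orderings yield the same bound.
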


\subsubsection{Skeletonization results}

We now consider the skeletonization step of the algorithm performed through the two successive rank-revealing QR factorizations.

\paragraph{Rank-Revealing QR factorizations} 

    Let us first recall what a rank-revealing QR factorization is. Given a matrix $A \in \R^{m \times n}$, one can compute a rank-revealing QR factorization \cite{golub2012matrix} of the form
\[ A \Pi = \begin{bmatrix} Q_1 & Q_2 \end{bmatrix} \begin{bmatrix} R_{11} & R_{12} \\ & R_{22} \end{bmatrix} \]
where $\Pi$ is a permutation matrix, $Q$ an orthogonal matrix and $R$ a triangular matrix. Both $R$ and $Q$ are partitioned so that $Q_1 \in \R^{m \times k}$ and $R_{11} \in \R^{k \times k}$. If $\|R_{22}\| \approx \varepsilon$, this factorization typically indicates that $A$ has an $\varepsilon$-rank of $k$.
The converse, however, is not necessarily true \cite{golub2012matrix} in general.

From there, one can also write
\[ A \Pi = Q_1 R_{11} \begin{bmatrix} I & R_{11}^{-1} R_{12} \end{bmatrix} + E = A_1 \begin{bmatrix} I & T \end{bmatrix} + E \]
where $T$ is the \emph{interpolation} operator, $A_1$ a set of $k$ columns of $A$ and $E$ the approximation error. This approximation can be achieved by a simple column-pivoted QR algorithm \cite{golub2012matrix}. This algorithm, however, is not guaranteed to always work (i.e., even if $A$ has rapidly decaying singular values, this rank-revealing factorization may fail to exhibit it). 

A \emph{strong} rank-revealing QR, however, has more properties.
It has been proven \cite{gu1996efficient,cheng2005compression} that one can compute in $\OO{m n^2}$  a rank-revealing QR factorization that guarantees
\begin{equation} \label{eq:strongRRQR} \sigma_i(A_1) \geq \frac{\sigma_i(A)}{q_1(n,k)} \text{, } \sigma_j(E) \leq \sigma_{k+j}(A) q_1(n,k) \text{ and } \|T\|_F \leq q_2(n,k) \end{equation}
    where $q_1$ and $q_2$ are two small polynomials (with fixed constants and degrees). The existence of this factorization is a crucial part of our argument.
Using the interlacing property of singular values \cite{golub2012matrix}, this implies that we now have both lower and upper bounds on the singular values of $A_1$
\begin{equation} \label{eq:lower-upper} \frac{\sigma_i(A)}{q_1(n,k)} \leq \sigma_i(A_1) \leq \sigma_i(A) \end{equation}
From \autoref{eq:strongRRQR} we can directly relate the error $E$ and $\sigma_{k+1}$ from
\begin{equation} \label{eq:error-lower} \|E\|_2 = \sigma_1(E) \leq \sigma_{k+1}(A) q_1(n,k) \end{equation}

Finally, given a matrix $A$, one can apply the above result to both its rows and columns, leading to a factorization
\[ \Pi_r^\top A \Pi_c = \begin{bmatrix} I \\ T_r \end{bmatrix} A_{rc} \begin{bmatrix} I & T_c \end{bmatrix} + E \]
with the same properties as detailed above.

\paragraph{Skeletonized Interpolation}
We can now apply this results to the $K_w$ and $\Kwhat$ matrices. 

    \begin{lemma}[CUR Decomposition of $K_w$] \label{lemma:3}
        The partition $\Xcheb = \Xcheba \cup \Xchebb$, $\Ycheb = \Ycheba \cup \Ychebb$ of \autoref{algo:ai} is such that there exist $\widecheck S$, $\widecheck T$, $\Eqr(\Xcheb, \Ycheb)$ matrices and a slowly-growing polynomial $p(r_0, r_1)$ such that
            \[ K_w = \begin{bmatrix} I \\ \widecheck S \end{bmatrix} \Kwhat \begin{bmatrix} I & \widecheck{T}^\top \end{bmatrix} + \Eqr(\Xcheb, \Ycheb) \]
        and where
        \begin{gather*}
        \epsilon = \| \Eqr(\Xcheb, \Ycheb) \|_2 \leq  p(r_0, r_1) \sigma_{r_1+1}(K_w) \\
            \| \widecheck S \|_2 \leq p(r_0, r_1) \\
            \| \widecheck T \|_2 \leq p(r_0, r_1)
        \end{gather*}
        Finally, we have
        \[ \| \Kwhat^{-1} \|_2  \leq \frac{p(r_0, r_1)^2}{\epsilon} \]
    \end{lemma}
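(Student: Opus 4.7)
My plan to establish \autoref{lemma:3} is to run the two strong RRQRs prescribed by \autoref{algo:ai}, extract the one-sided interpolation operators each of them provides, paste them into a two-sided CUR factorization, and then track error propagation and singular-value bounds using \autoref{eq:strongRRQR} and \autoref{eq:lower-upper}.

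First I apply a strong RRQR with tolerance $\epsilon$ to $K_w$: this selects the column subset $\Ycheba$, yields an interpolation operator $\widecheck T$ with $\|\widecheck T\|_F \leq q_2(r_0, r_1)$, and leaves a residual $E_c$ of norm at most $q_1(r_0, r_1)\,\sigma_{r_1+1}(K_w)$. By \autoref{eq:lower-upper}, the retained columns satisfy $\sigma_{r_1}(K_w(:, \Ycheba)) \geq \sigma_{r_1}(K_w)/q_1$. The symmetric statement applied to $K_w^\top$ extracts $\Xcheba$ and produces $\widecheck S$ together with a residual $E_r$ with analogous bounds.

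Next I glue the two one-sided approximations together. Restricting the column expansion to the rows $\Xcheba$ gives $K_w(\Xcheba,:) = \Kwhat \begin{bmatrix} I & \widecheck T^\top \end{bmatrix} P_y^\top + E_c(\Xcheba,:)$, and substituting into the row expansion $K_w = \begin{bmatrix} I \\ \widecheck S \end{bmatrix} K_w(\Xcheba,:) + E_r$ (after the appropriate row permutation) yields the advertised factorization with residual
\[ \|\Eqr\|_2 \leq \|E_r\|_2 + (1 + \|\widecheck S\|_2)\|E_c\|_2 \leq p(r_0, r_1)\,\sigma_{r_1+1}(K_w) \]
for a small polynomial $p$ built from $q_1$ and $q_2$. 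The bounds on $\|\widecheck S\|_2$ and $\|\widecheck T\|_2$ then follow directly from \autoref{eq:strongRRQR}.

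The last inequality, $\|\Kwhat^{-1}\|_2 \leq p(r_0, r_1)^2/\epsilon$, is where I expect the main difficulty: because the two RRQRs are run independently, the one-sided lower bounds on $\sigma_{r_1}(K_w(:,\Ycheba))$ and $\sigma_{r_1}(K_w(\Xcheba,:))$ do not transfer automatically to their intersection $\Kwhat = K_w(\Xcheba,\Ycheba)$. I would handle this by writing the truncated SVD $K_w = U \Sigma V^\top + R$ with $\|R\|_2 = \sigma_{r_1+1}(K_w)$, noting that the two one-sided RRQR guarantees force both $U(\Xcheba,:)$ and $V(\Ycheba,:)$ to be well-conditioned (smallest singular value at least $1/q_1$), and then applying Weyl's inequality to $\Kwhat = U(\Xcheba,:)\,\Sigma\,V(\Ycheba,:)^\top + R(\Xcheba,\Ycheba)$. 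Because the RRQR is terminated at tolerance $\epsilon$, we have $\sigma_{r_1}(K_w) \gtrsim \epsilon$ and $\sigma_{r_1+1}(K_w) \lesssim \epsilon/q_1$, so the Weyl estimate collapses to $\sigma_{r_1}(\Kwhat) \gtrsim \epsilon/q_1^2$, which is the desired bound. Everything else amounts to routine bookkeeping of the Gu--Eisenstat polynomial factors.
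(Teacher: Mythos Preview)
Your construction of the two-sided factorization and the bounds on $\widecheck S$, $\widecheck T$, and $\Eqr$ are fine and match what the paper does (the paper simply cites \cite[Theorem~3 and Remark~5]{cheng2005compression} for the combined row/column statement rather than gluing two one-sided RRQRs by hand).

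The gap is in your argument for $\|\Kwhat^{-1}\|_2$. The claim that the one-sided RRQR guarantees force $\sigma_{\min}(U(\Xcheba,:))\ge 1/q_1$ and $\sigma_{\min}(V(\Ycheba,:))\ge 1/q_1$ does not follow from \autoref{eq:strongRRQR} or \autoref{eq:lower-upper}. If you try to extract such a bound from $\sigma_{r_1}(K_w(:,\Ycheba))\ge\sigma_{r_1}(K_w)/q_1$ via Weyl and $K_w(:,\Ycheba)=U\Sigma V(\Ycheba,:)^\top+R(:,\Ycheba)$, you get at best
\[
\sigma_{r_1}(V(\Ycheba,:))\;\ge\;\frac{\sigma_{r_1}(K_w)/q_1-\sigma_{r_1+1}(K_w)}{\sigma_1(K_w)},
\]
which carries the condition number $\sigma_1(K_w)/\sigma_{r_1}(K_w)$ and is \emph{not} a polynomial in $(r_0,r_1)$. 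Your subsequent Weyl estimate on $\Kwhat$ then inherits this factor, and the conclusion $\sigma_{r_1}(\Kwhat)\gtrsim\epsilon/q_1^2$ is unjustified.

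The paper avoids this entirely. The two-sided CUR of \cite{cheng2005compression} already delivers the lower bound $\sigma_{r_1}(\Kwhat)\ge\sigma_{r_1}(K_w)/p(r_0,r_1)$ on the core submatrix (this is precisely the nontrivial content of that reference, stated in the paper just before \autoref{lemma:3} as ``the same properties as detailed above'' for the two-sided case). With that in hand the proof is a three-line chain:
\[
\|\Kwhat^{-1}\|_2=\frac{1}{\sigma_{r_1}(\Kwhat)}\le\frac{p(r_0,r_1)}{\sigma_{r_1}(K_w)}\le\frac{p(r_0,r_1)}{\sigma_{r_1+1}(K_w)}\le\frac{p(r_0,r_1)^2}{\epsilon},
\]
using \autoref{eq:lower-upper}, monotonicity of singular values, and \autoref{eq:error-lower} in turn. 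So rather than rederiving the core singular-value bound from scratch, you should invoke the two-sided result directly.
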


    \begin{proof}
        The first three results are direct applications of \cite[theorem 3 and remark 5]{cheng2005compression} as explained in the previous paragraph.
        The last result follows from the properties of the strong rank-revealing QR:
        \[ \|\hat K_w^{-1}\|_2 = \frac 1{\sigma_{r_1}(\hat K_w)} \leq \frac{p(r_0,r_1)}{\sigma_{r_1}(K_w)} \leq \frac{p(r_0,r_1)}{\sigma_{r_1+1}(K_w)} \leq \frac{p(r_0,r_1)^2}{\epsilon} \]
        The first inequality follows from $\sigma_{r_1}(K_w) \leq \sigma_{r_1}(\Kwhat) p(r_0,r_1)$ (\autoref{eq:lower-upper}), the second from $\sigma_{r_1}(K_w) \geq \sigma_{r_1+1}(K_w)$ (by definition of singular values) and the last from $\sigma_{r_1+1}(K_w)^{-1} \leq p(r_0,r_1) \epsilon^{-1}$ (\autoref{eq:error-lower}).

    \end{proof}

    Finally, a less obvious result
    \begin{lemma} \label{lemma:4}
        There exist a polynomial $q(r_0, r_1)$ such that for any $x \in \Xspace, y \in \Yspace$,
        \begin{gather*}
            \| \Kfun(x, \Ycheba) \Kfun(\Xcheba, \Ycheba)^{-1} \|_2 = \OO{q(r_0, r_1)} \\
            \| \Kfun(\Xcheba, \Ycheba)^{-1} \Kfun(\Xcheba, y) \|_2 = \OO{q(r_0, r_1)}
        \end{gather*}
    \end{lemma}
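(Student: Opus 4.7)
The plan is to relate $\Kfun(x,\Ycheba)$ back to the full tensor grid via \lemmaautorefname~\ref{lemma:1} and then exploit the CUR structure of \lemmaautorefname~\ref{lemma:3} to control the product against $\Kfun(\Xcheba,\Ycheba)^{-1}$. Since every column of $\Ycheba$ is itself a node of $\Ycheb$, the Lagrange row-vector $T(\hat y_j,\Ycheb)$ collapses to a standard basis vector, so the two-dimensional interpolation identity restricted to the $\Ycheba$-columns reduces to
\[
\Kfun(x,\Ycheba) = S(x,\Xcheb)\,\Kfun(\Xcheb,\Ycheba) + E(x,\Ycheba),
\]
where $E(x,\Ycheba)$ is a row of pure $x$-interpolation errors of pointwise size $\OO{\delta}$, hence $\|E(x,\Ycheba)\|_2 = \OO{\delta\sqrt{r_1}}$. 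Multiplying on the right by $\Kfun(\Xcheba,\Ycheba)^{-1}$ produces a leading CUR piece plus a residual; I will bound each separately.

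For the leading piece, split $\Xcheb = \Xcheba\cup\Xchebb$. The $\Xcheba$-rows of $\Kfun(\Xcheb,\Ycheba)\,\Kfun(\Xcheba,\Ycheba)^{-1}$ form an identity block, while the $\Xchebb$-rows, after inserting the weight matrices needed to switch from $\Kfun$ to $K_w$ and back, can be written as $\diag(\widecheck W_X)^{-1/2}\,\widecheck S\,\DWxa^{1/2}$, where $\widecheck S$ is the CUR factor supplied by \lemmaautorefname~\ref{lemma:3} with $\|\widecheck S\|_2\le p(r_0,r_1)$. By \lemmaautorefname~\ref{lemma:1} the weight factors are $\OO{\nx}$ and $\OO{1/\sqrt{\nx}}$, and $\|S(x,\Xcheb)\|_2 = \OO{\log^d\nx}$, so the leading piece is bounded by a polynomial in $r_0,r_1$.

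For the residual $E(x,\Ycheba)\Kfun(\Xcheba,\Ycheba)^{-1}$, write $\Kfun(\Xcheba,\Ycheba)^{-1} = \DWya^{1/2}\Kwhat^{-1}\DWxa^{1/2}$ and invoke $\|\Kwhat^{-1}\|_2 \le p(r_0,r_1)^2/\epsilon$ from \lemmaautorefname~\ref{lemma:3}. Because the algorithm is run in the regime $\delta<\epsilon$ flagged in the convergence overview, the $1/\epsilon$ cancels with the $\delta$ already present in $\|E(x,\Ycheba)\|_2$, and only weight factors polynomial in $\nx,\ny$ remain. Adding the leading and residual bounds yields the first inequality. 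The second inequality follows by the completely symmetric argument: apply the $y$-interpolation identity to express $\Kfun(\Xcheba,y)$ in terms of $\Kfun(\Xcheba,\Ycheb)T(y,\Ycheb)^\top$ plus an $\OO{\delta}$ error, and use the CUR factor $\widecheck T$ in place of $\widecheck S$.

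The main obstacle is bookkeeping: every bound picks up polynomial factors in $\nx,\ny,\log\nx$, and one unfriendly factor of $1/\epsilon$ appears through $\|\Kwhat^{-1}\|_2$. The key cancellation is $\delta/\epsilon=\OO{1}$ on the residual piece; once this is in place and the weight-matrix inequalities from \lemmaautorefname~\ref{lemma:1} are used consistently, all remaining factors assemble into a single polynomial $q(r_0,r_1)$.
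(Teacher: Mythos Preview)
Your approach is essentially the paper's: interpolate in one variable to pass from the point $x$ to the full Chebyshev grid $\Xcheb$, then invoke the CUR structure of \lemmaautorefname~\ref{lemma:3} together with $\|\Kwhat^{-1}\|_2\le p(r_0,r_1)^2/\epsilon$ so that every stray $1/\epsilon$ is cancelled by an $\epsilon$ from $\Eqr$ or a $\delta\le\epsilon$ from $\Eint$. The paper carries out the symmetric $y$-version and arrives at the same three-term split (a bounded CUR piece, an $\Eqr$ piece with $\epsilon/\epsilon$ cancellation, and an $\Eint$ piece with $\delta/\epsilon$ cancellation).

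One bookkeeping slip worth flagging: when you write the $\Xchebb$-rows of $\Kfun(\Xcheb,\Ycheba)\Kfun(\Xcheba,\Ycheba)^{-1}$ as exactly $\diag(\widecheck W_X)^{-1/2}\,\widecheck S\,\DWxa^{1/2}$, you have silently dropped the CUR error term. From \lemmaautorefname~\ref{lemma:3} one only has $K_w(\Xchebb,\Ycheba)=\widecheck S\,\Kwhat+\Eqr(\Xchebb,\Ycheba)$, so an extra summand $\diag(\widecheck W_X)^{-1/2}\,\Eqr(\Xchebb,\Ycheba)\,\Kwhat^{-1}\,\DWxa^{1/2}$ appears. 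This is harmless---it is bounded by the same $\epsilon\cdot p(r_0,r_1)^2/\epsilon$ cancellation you already use on the residual piece---but it should be written out, since otherwise the ``leading piece'' is not actually equal to what you claim.
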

    We provide the proof in the appendix; the key ingredient is simply that $\|\hat K_w^{-1}\|_2 \leq p(r_0,r_1)^2 \epsilon^{-1}$ from the RRQR properties; hence $\hat K_w$ is ill-conditioned, but not arbitrarily. Its condition number grows like $\epsilon^{-1}$. Then, when multiplied by quantities like $\epsilon$ or $\delta \ll \epsilon$, the factors cancel out and the resulting product can be bounded.

\subsubsection{Link between the node matrix and the continuous SVD} \label{sec:contsvd}

In this section, we link the continuous SVD and the spectrum (singular values) of the matrix $\DWx^{1/2} K_w \DWy^{1/2}$. \revrev{This justifies the use of the weights.}

For the sake of simplicity, consider the case where interpolation is performed at \emph{Gauss-Legendre} nodes $\Xcheb, \Ycheb$ with the corresponding integration weights $\Wx, \Wy$.
(A more complete explanation can be found in \cite{yarvin1998generalized}.)

Take the classical discrete SVD of $K_w$,
    \[ K_w = \overline U\, \overline \Sigma\, \overline V^\top \]
We then have
\[ \Kfun(x, y) = \underbrace{ S_w(x, \Xcheb) \overline U \, \overline \Sigma \, \overline V^\top T_w(y, \Ycheb)^\top }_{=\overline \Kfun(x,y)}+ \Eint(x, y) \]
Then, denote the sets of new basis functions
    \[ \overline u(x) = S_w(x, \Xcheb) \overline U \qquad \overline v(y) = T_w(y, \Ycheb) \overline V \]
The key is to note that those functions are orthonormal. Namely, for $\overline u$,
\begin{align*} 
\int_\Xspace \overline u_i(x) \overline u_j(x) \dif x & = \sum_{k=1}^{r_0} \overline w_k \overline u_i(\overline x_k) \overline u_j(\overline x_k) \\
& = \sum_{k=1}^{r_0} \overline w_k \left( \sum_{l=1}^{r_0} \overline w_l^{-1/2} S_l(\overline x_k) \overline U_{li} \right) \left( \sum_{l=1}^{r_0} \overline w_l^{-1/2} S_l(\overline x_k) \overline U_{lj} \right)  \\
& = \sum_{k=1}^{r_0} \overline w_k \left(  \sum_{l=1}^{r_0} \delta_{kl} \overline w_l^{-1/2} \overline U_{li} \right) \left( \sum_{l=1}^{r_0} \delta_{kl} \overline w_l^{-1/2} \overline U_{lj} \right) \\
& = \sum_{k=1}^{r_0} \overline w_k \overline w_k^{-1/2} \overline U_{ki} \overline w_k^{-1/2} \overline U_{kj} = \sum_{k=1}^{r_0} \overline U_{ki} \overline U_{kj} = \delta_{ij}
    \end{align*}
The same result holds for $\overline v$. 
This follows from the fact that a Gauss-Legendre quadrature rule with $n$ points can exactly integrate polynomials up to degree $2n-1$.
This shows that we are implicitly building a factorization
\begin{equation} \label{eq:implicit} \Kfun(x, y) = \sum_{s=1}^\infty \sigma_s u_s(x) v_s(y) = \underbrace{\sum_{s=1}^{r_0} \sigma_s(K_w) \overline u_s(x) \overline v_s(y)}_{=\overline \Kfun(x,y)} + \Eint(x, y) \end{equation}
where the approximation error is bounded by the interpolation error $\Eint$ and where the sets of basis functions are orthogonal.

    Assume now that the kernel $\Kfun$ is square-integrable over $[-1,1]^d \times [-1,1]^d$. This is called a Hilbert-Schmidt kernel \cite[Lemma 8.20]{renardy2006introduction}. This implies that the associated linear operator is compact \cite[Theorem 8.83]{renardy2006introduction}. $\KfunI$ is compact as well since it is finite rank \cite[Theorem 8.80]{renardy2006introduction}. Given the fact that $|\Eint(x,y)| \leq \delta$ for all $x,y$, $\|\Kfun - \KfunI\|_{L_2} \leq C \delta$ for some $C$ and hence, by compactness of both operators \cite[Corollary 2.2.14]{guven2016quantitative},
    \[ |\sigma_i - \sigma_i(\KfunI)|  \leq C \delta \]
    for some $C > 0$. Then, from the above discussion, we clearly have $\sigma_i(K_w) = \sigma_i(\KfunI) + \OO{\delta}$ and hence
    \[ \sigma_i(K_w) = \sigma_i + \OO{\delta} \]

This result only formally holds for Gauss-Legendre nodes and weights. However, this motivates the use of integration weights in the case of Chebyshev as well.

\subsubsection{Convergence of the Skeletonized interpolation} \label{sec:theorem}

We \revrev{now} present the main result of this paper:

\begin{theorem}[Convergence of Skeletonized Interpolation] \label{thm:ai} If $\Xcheba$ and $\Ycheba$ are constructed following \autoref{algo:ai}, then there exist a polynomial $r(r_0, r_1)$ such that for any $x \in \Xspace$ and $y \in \Yspace$,
    \[ | \Kfun(x,y) - \Kfun(x,\Ycheba) \Kfun(\Xcheba, \Ycheba)^{-1} \Kfun(\Xcheba, y) | = \OO{\epsilon \, r(r_0, r_1)} \]
\end{theorem}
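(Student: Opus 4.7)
The plan is to chain the two approximate factorizations from \autoref{lemma:1} and \autoref{lemma:3}, using \autoref{lemma:4} to absorb the potentially dangerous inverse $\Kfun(\Xcheba,\Ycheba)^{-1}$ whenever an $\Eint$ can pair with it. Starting from the interpolation identity
\[ \Kfun(x,y) = S(x,\Xcheb) \Kfun(\Xcheb,\Ycheb) T(y,\Ycheb)^\top + \Eint(x,y), \]
I rewrite the CUR factorization of \autoref{lemma:3} without the weight matrices as
\[ \Kfun(\Xcheb,\Ycheb) = \Kfun(\Xcheb,\Ycheba) \Kfun(\Xcheba,\Ycheba)^{-1} \Kfun(\Xcheba,\Ycheb) + E', \]
with $\|E'\|_2 \leq \|\DWx^{-1/2}\|_2 \|\Eqr\|_2 \|\DWy^{-1/2}\|_2 = \OO{\nx \ny \,\epsilon}$. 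Substituting this into the first identity, and then applying the interpolation identity restricted to the columns $\Ycheba$ and to the rows $\Xcheba$ to identify
\[ S(x,\Xcheb) \Kfun(\Xcheb,\Ycheba) = \Kfun(x,\Ycheba) - \Eint(x,\Ycheba), \quad \Kfun(\Xcheba,\Ycheb) T(y,\Ycheb)^\top = \Kfun(\Xcheba,y) - \Eint(\Xcheba,y), \]
I expand the resulting product so that the leading term is exactly the claimed approximant $\Kfun(x,\Ycheba) \Kfun(\Xcheba,\Ycheba)^{-1} \Kfun(\Xcheba,y)$ and the remainder splits into five explicit error contributions.

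Next I would bound each remainder. Four of the five are ``mixed'' terms in which at least one of the full interpolation operators $\Kfun(x,\Ycheba)\Kfun(\Xcheba,\Ycheba)^{-1}$ or $\Kfun(\Xcheba,\Ycheba)^{-1}\Kfun(\Xcheba,y)$ appears intact, so \autoref{lemma:4} bounds it by $\OO{q(r_0,r_1)}$ while the companion $\Eint$ vector contributes at most $\sqrt{r_1}\,\delta$; similarly $S(x,\Xcheb) E' T(y,\Ycheb)^\top$ is controlled by \autoref{lemma:1} together with the bound on $\|E'\|_2$, and $\Eint(x,y)$ is $\OO{\delta}$ by construction. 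Under the standing assumption $\delta \leq \epsilon$, each of these contributions collapses into $\OO{r(r_0,r_1)\,\epsilon}$ for a suitable polynomial $r$.

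The genuinely delicate contribution is the ``double-error'' piece
\[ \Eint(x,\Ycheba) \Kfun(\Xcheba,\Ycheba)^{-1} \Eint(\Xcheba,y), \]
in which the inverse is left bare and \autoref{lemma:4} no longer applies. Here I fall back on the last estimate of \autoref{lemma:3}, namely $\|\Kwhat^{-1}\|_2 \leq p(r_0,r_1)^2/\epsilon$, combined with the weight bounds of \autoref{lemma:1}, to conclude $\|\Kfun(\Xcheba,\Ycheba)^{-1}\|_2 = \OO{p(r_0,r_1)^2/(r_0\,\epsilon)}$. The term is therefore bounded by $\OO{r_1\,p(r_0,r_1)^2\,\delta^2/(r_0\,\epsilon)}$, which the assumption $\delta \leq \epsilon$ turns into $\OO{r_1\,p(r_0,r_1)^2\,\epsilon/r_0}$. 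Collecting all five estimates into a single polynomial $r(r_0,r_1)$ yields the theorem; the crucial point, and the main obstacle one has to be careful about, is that the $\epsilon^{-1}$ ill-conditioning of $\Kfun(\Xcheba,\Ycheba)^{-1}$ is cancelled exactly by the $\delta^2$ factor whenever $\delta \leq \epsilon$.
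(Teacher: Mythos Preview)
Your argument is essentially the paper's own: chain the Chebyshev interpolation identity with the CUR factorization of \autoref{lemma:3}, expand, and bound each residual via \autoref{lemma:1}, \autoref{lemma:3} and \autoref{lemma:4}. The paper keeps the interpolative form $\bigl[\begin{smallmatrix} I \\ \widecheck S \end{smallmatrix}\bigr]\Kwhat\bigl[\begin{smallmatrix} I & \widecheck T^\top \end{smallmatrix}\bigr]$ throughout and so ends with eight residual terms; you first collapse it to the cross form $\Kfun(\Xcheb,\Ycheba)\Kfun(\Xcheba,\Ycheba)^{-1}\Kfun(\Xcheba,\Ycheb)+E'$, reaching the same approximant after only five residuals. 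The handling of the ``bare inverse'' term $\Eint(x,\Ycheba)\Kfun(\Xcheba,\Ycheba)^{-1}\Eint(\Xcheba,y)$ via $\|\Kwhat^{-1}\|_2\le p(r_0,r_1)^2/\epsilon$ and $\delta\le\epsilon$ is exactly what the paper does.

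One step is not justified as written, though. You assert $\|E'\|_2 \le \|\DWx^{-1/2}\|_2\,\|\Eqr\|_2\,\|\DWy^{-1/2}\|_2$, which presumes that $\DWx^{1/2}E'\DWy^{1/2}=\Eqr$. It does not: \autoref{lemma:3} delivers the \emph{interpolative} approximation $\bigl[\begin{smallmatrix} I \\ \widecheck S \end{smallmatrix}\bigr]\Kwhat\bigl[\begin{smallmatrix} I & \widecheck T^\top \end{smallmatrix}\bigr]$, not the Schur/cross approximation, and the two differ by the extra pieces
\[
\begin{bmatrix} I \\ \widecheck S \end{bmatrix}\Eqr(\Xcheba,\Ycheb),\qquad
\Eqr(\Xcheb,\Ycheba)\begin{bmatrix} I & \widecheck T^\top \end{bmatrix},\qquad
\Eqr(\Xcheb,\Ycheba)\,\Kwhat^{-1}\,\Eqr(\Xcheba,\Ycheb).
\]
Each of these is still $\OO{p(r_0,r_1)^2\epsilon}$ by \autoref{lemma:3}, so your conclusion $\|E'\|_2=\OO{r(r_0,r_1)\,\epsilon}$ for some polynomial $r$ survives and the remainder of your proof goes through unchanged; but the displayed inequality itself is incorrect. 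The paper avoids this conversion altogether by substituting $\bigl[\begin{smallmatrix} I \\ \widecheck S \end{smallmatrix}\bigr]\Kwhat$ and $\Kwhat\bigl[\begin{smallmatrix} I & \widecheck T^\top \end{smallmatrix}\bigr]$ for the corresponding subblocks of $K_w$ \emph{inside} the interpolation sandwich, which is precisely where its additional $\Eqr$ terms originate.
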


The key here is that the error incurred during the CUR decomposition, $\epsilon$ is amplified by, at most, a polynomial of $r_0$ and $r_1$. Hence, \autoref{thm:ai} indicates that if the spectrum decays fast enough (i.e., if $\epsilon \to 0$ when $r_0, r_1 \to \infty$ faster than $r(r_0,r_1)$ grows), the proposed approximation should converge to the true value of $\Kfun(x, y)$.

What is simply left is then linking $\epsilon$, $r_0$, and $r_1$. We have, from the CUR properties,
\[\epsilon \leq p(r_0,r_1) \sigma_{r_1+1}(K_w) \]
which implies
    \[ |\Kfun(x,y) - \Kfun(x, \Ycheba) \Kfun(\Xcheba,\Ycheba)^{-1} \Kfun(\Xcheba,y)| = \OO{\sigma_{r_1+1}(K_w) r'(r_0,r_1)} \]

Then, following the discussion from \autoref{sec:contsvd}, we expect 
\[ \sigma_i(K_w) = \sigma_i + \OO{\delta} \]
Hence, if $\Kfun$ has rapidly-decaying singular values, so does $K_w$. Assuming the singular values of $K_w$ decay exponentially fast, i.e.,
    \[ \log \sigma_k(K_w) \approx \text{poly}(k), \]
we find
    \[ |\Kfun(x, y) - \Kfun(x, \Ycheba) \Kfun(\Xcheba, \Ycheba)^{-1} \Kfun(\Xcheba, y)| \to 0 \]
as $r_0,r_1 \to \infty$, or alternatively, as $\epsilon \to 0$.

\section{Numerical stability} \label{sec:stab}

\subsection{The problem}

The previous section indicates that, at least theoretically, we can expect convergence as $\epsilon \to 0$. However, the factorization
\begin{equation} \label{eq:SI} \Kfun(X,Y) \approx \Kfun(X, \Ycheba) \Kfun(\Xcheba, \Ycheba)^{-1} \Kfun(\Xcheba, Y) \end{equation}
    seems to be numerically challenging to compute. Indeed, as we showed in the previous section, we can only really expect at best $\|\Kwhat^{-1}\|_2 = \OO{\epsilon^{-1}}$ which indicates that, roughly,
\[ \kappa(\Kfun(\Xcheba,\Ycheba)) = \OO{\frac 1 \epsilon} \]
i.e., the condition number grows with the desired accuracy, and convergence beyond a certain threshold (like $10^{-8}$ in double-precision) seems impossible. Hence, we can reasonably be worried about the numerical accuracy of computing \autoref{eq:SI} even with a stable algorithm.

Note that this is not a pessimistic upper bound; by construction, $\hat K_w$ really is ill-conditioned, and experiments show that solving linear systems $\hat K_w x = b$ with random right-hand sides is numerically challenging and leads to errors of the order $\epsilon^{-1}$.

\subsection{Error Analysis}
Consider \autoref{eq:SI} and let for simplicity
\[ K_x = \Kfun(X, \Ycheba), \quad 
   K = \Kfun(\Xcheba, \Ycheba), \quad 
   K_y = \Kfun(\Xcheba, Y) \]
In this section, our goal is to show why one can expect this formula to be accurately computed if one uses backward stable algorithms. As proved in \autoref{sec:algo}, we have the following bounds on the interpolation operators
\begin{gather*}
  \| K_x K^{-1} \|_2 \leq p(r_0, r_1) \\
  \| K^{-1} K_y \|_2 \leq p(r_0, r_1)
\end{gather*}
for some polynomial $p$. The key is that there is no $\epsilon^{-1}$ in this expression.
Those bounds essentially follow from the guarantees provided by the strong rank-revealing QR algorithm.

\revrev{
    Now, let's compute the derivative of $K_x K^{-1} K_y$ with respect to $K_x$, $K$ and $K_y$ \cite{petersen2008matrix}:
    \begin{align*} \partial(K_x K^{-1} K_y) & = (\partial K_x) K^{-1} K_y + K_x (\partial (K^{-1})) K_y  + K_x K^{-1} (\partial K_y) \\
                                            & = (\partial K_x) K^{-1} K_y - K_x K^{-1} (\partial K) K^{-1} K_y  + K_x K^{-1} (\partial K_y) \end{align*}
    }

\revrev{
Then, consider perturbing $K_x$, $K$, $K_y$ by $\varepsilon$ (assume all matrices are of order $\OO{1}$ for the sake of simplicity), i.e., let $\delta K_x$, $\delta K_y$ and $\delta K$ be perturbations of $K_x$, $K_y$ and $K$, respectively, with
\[ \|\delta K_x\| = \OO{\varepsilon}, \|\delta K_y\| = \OO{\varepsilon}, \|\delta K\| = \OO{\varepsilon}. \]
Then, using the above derivative as a first order approximation, we can write
\begin{align*} \| & K_x K^{-1} K - (K_x + \delta K_x)(K + \delta K)^{-1}(K_y + \delta K_y) \| \\
                    & \leq \|\delta K_x\| \|K^{-1} K_y\| + \|K_x K^{-1}\| \|\delta K\| \|K^{-1} K_y\| + \|K_x K^{-1}\| \|\delta K_y\| + \OO{\varepsilon^2} \\ 
                    & \leq 2 \epsilon p(r_0, r_1) + \epsilon p(r_0, r_1)^2 + \OO{\varepsilon^2} \\
                    & = \varepsilon (2 p(r_0,r_1) + p(r_0,r_1)^2) + \OO{\varepsilon^2} \\
\end{align*}
We see that the computed result is independent of the condition number of $K = \Kfun(\Xcheba, \Ycheba)$ and depends on $p(r_0, r_1)$ only.}

Assume now that we are using backward stable algorithms in our calculations \cite{higham2002accuracy}. We then know that the computed result is the result of an exact computation where the inputs have been perturbed by $\varepsilon$. The above result indicates that the numerical result (with roundoff errors) can be expected to be accurate up to $\varepsilon$ times a small polynomial, hence stable.

\section{Skeletonized Interpolation as a new interpolation rule}

\label{sec:new_interp}

As indicated in the introduction, one can rewrite
\begin{align*} \Kfun(x,y) & \approx \Kfun(x, \Ycheba) \Kfun(\Xcheba, \Ycheba)^{-1} \Kfun(\Xcheba, y) \\
                          & = \left[ \Kfun(x, \Ycheba) \Kfun(\Xcheba, \Ycheba)^{-1} \right] \Kfun(\Xcheba, \Ycheba) \left[ \Kfun(\Xcheba, \Ycheba)^{-1} \Kfun(\Xcheba, y) \right]  \\
                          & = \Shat(x, \Xcheba) \Kfun(\Xcheba, \Ycheba) \That(y, \Ycheba)^\top \end{align*}
where we recognize two new ``cross-interpolation" (because they are build by considering both the $\Xspace$ and $\Yspace$ space) operators $\Shat(x, \Xcheba) = \Kfun(x, \Ycheba) \Kfun(\Xcheba, \Ycheba)^{-1}$ and $\That(y, \Ycheba) = \Kfun(\Xcheba, y)^\top \Kfun(\Xcheba, \Ycheba)^{-\top}$. In this notation, each column of $\Shat(x, \Xcheba)$ and $\That(y, \Ycheba)$ is a Lagrange function associated to the corresponding node in $\Xcheba$ or $\Ycheba$ and evaluated at $x$ or $y$, respectively.

This interpretation is interesting as it allows to ``decouple'' $x$ and $y$ and analyze them independently. In particular, one can look at the quality of the interpolation of the basis functions $u_k(x)$ and $v_k(y)$ using $\Shat$ and $\That$. Indeed, if this is accurate, it is easy to see that the final factorization is accurate. Indeed,
\begin{align*} \Kfun(x, y) & \approx \sum_{k=1}^r \sigma_k u_k(x) v_k(y) \\
                           & \approx  \sum_{k=1}^r \sigma_k (\Shat(x, \Xcheba) u_k(\Xcheba)) (\That(y, \Ycheba) v_k(\Ycheba))^\top \\
                           & =       \Shat(x, \Xcheba) \left( \sum_{k=1}^r \sigma_k u_k(\Xcheba) v_k(\Ycheba)^\top \right) \That(y, \Ycheba)^\top  \\
                           & \approx \Shat(x, \Xcheba) \Kfun(\Xcheba, \Ycheba) \That(y, \Ycheba)^\top \\
                           & \approx \Kfun(x, \Ycheba) \Kfun(\Xcheba, \Ycheba)^{-1} \Kfun(\Xcheba, y) \\
                           \end{align*}

To illustrate this, let us consider a simple 1-dimensional example. Let $x, y \in [-1,1]$ and consider
\[ \Kfun(x, y) = \frac 1{4 + x - y}. \]
Then approximate this function up to $\epsilon = 10^{-10}$, and obtain a factorization of rank $r$.

\autoref{fig:toy_lag4} illustrates the 4\textsuperscript{th} Lagrange basis function in $x$, i.e., $\Shat(x, \Xcheba)_4$ and the classical Lagrange polynomial basis function associated with the same set $\Xcheba$. We see that they are both 1 at $\Xcheba_4$ and 0 at the other \revrev{points}. However, $\Shat(x, \Xcheba)_4$ is much more stable and small than its polynomial counterpart. In the case of polynomial interpolation at equispaced nodes, the growth of the Lagrange basis function (or, equivalently, of the Lebesgue constant) is the reason for the inaccuracy and instability.

\autoref{fig:toy_interp_umax} shows the effect of interpolating $u_r(x)$ using $\Shat(x, \Xcheba)$ as well as using the usual polynomial interpolation at the nodes $\Xcheba$. We see that $\Shat(x, \Xcheba)$ interpolates very well $u_r(x)$, showing indeed that we implicitly build an accurate interpolant, even on the last (least smooth) eigenfunctions. The usual polynomial interpolation fails to capture any feature of $u_r$ on the other hand. 
Note that we could have reached a similar accuracy using interpolation at Chebyshev nodes but only by using many more interpolation nodes.

Finally, \autoref{fig:toy_err} shows how well we approximate the various $r$ eigenfunctions. As one can see, interpolation is very accurate on $u_1(x)$, but the error grows for less smooth eigenfunctions. The growth is, roughly, similar to the growth of $\frac{\epsilon}{\sigma_i}$. Notice how this is just enough so that the resulting factorization is accurate:
\begin{align*}
    \Shat(x, \Xcheba) \Kfun(\Xcheba, y) &= \sum_{s=1}^r \sigma_s \Shat(x, \Xcheba) u_s(\Xcheba) v_s(y) +\OO{\epsilon} \\
                                        &= \sum_{s=1}^r \sigma_s \left(u_s(x) + \OO{\frac{\epsilon}{\sigma_s}}\right) v_s(y) +\OO{\epsilon} \\
                                        &= \sum_{s=1}^r \sigma_s u_s(x) v_s(y) + \sum_{s=1}^r \OO{\epsilon} v_s(y) + \OO{\epsilon} \\
                                    &= \Kfun(x, y) + \OO{\epsilon} \end{align*}
It is also consistent with the analysis of \autoref{sec:stab}. This illustrates how the algorithm works: it builds an interpolation scheme that allows for interpolating the various eigenfunctions of $\Kfun$ with just enough accuracy so that the resulting interpolation is accurate up to the desired accuracy. 

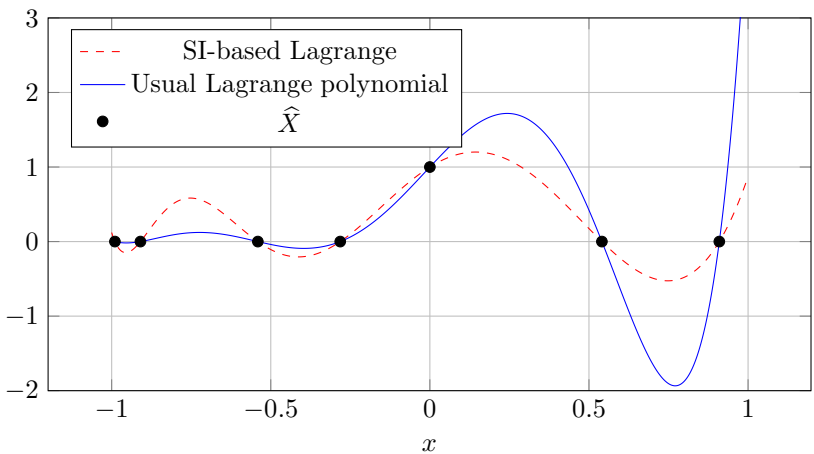
\begin{figure}[htbp]
    \centering
    \begin{tikzpicture}
        \begin{axis}
            [
              xlabel={$x$},
              height=0.5\textwidth,
              width=0.9\textwidth,
              grid=major,
              legend pos=north west,
              ymax=3,
              ymin=-2,
              xtick={-1, -0.5, 0, 0.5, 1},
              legend entries={SI-based Lagrange, Usual Lagrange polynomial, $\Xcheba$}
            ]
            \addplot [no marks, dashed, red] table [x=x,y=Shat]{figs/Lag4_X.dat};
            \addplot [no marks, blue] table [x=x,y=Spoly]{figs/Lag4_X.dat};
            \addplot [only marks, black] table [x=Xhat,y=OneShot]{figs/Lag4_Xhat.dat};
        \end{axis}
    \end{tikzpicture}
    \caption{4\textsuperscript{th} Lagrange basis function. We see that the Chebyshev-SI based Lagrange basis function is more stable than the usual polynomial going through the same interpolation nodes.}
    \label{fig:toy_lag4}
\end{figure}

\begin{figure}[htbp]
    \centering
    \begin{tikzpicture}
        \begin{axis}
            [
              xlabel={$x$},
              height=0.5\textwidth,
              width=0.8\textwidth,
              grid=major,
              legend style={cells={align=left}},
              legend pos=outer north east,
              ymax=2.5,
              ymin=-2.5,
              xtick={-1, -0.5, 0, 0.5, 1},
              legend style={row sep=3pt},
              legend entries={SI-based\\interpolant, Polynomial\\interpolant, Singular\\function, $\Xcheba$}
            ]
            \addplot [no marks, dashed, red] table [x=x,y=Shatu]{figs/Uend_X.dat};
            \addplot [no marks, orange] table [x=x,y=ShatLowu]{figs/Uend_X.dat};
            \addplot [no marks, dashed, blue] table [x=x,y=u]{figs/Uend_X.dat};
            \addplot [only marks, black] table [x=Xhat,y=u]{figs/Uend_Xhat.dat};
        \end{axis}
    \end{tikzpicture}
    \caption{Interpolation of the last (and least smooth) eigenfunction. We see that the Chebyshev-SI based interpolant is much more accurate than the polynomial interpolant going through the same interpolation nodes.}
    \label{fig:toy_interp_umax}
\end{figure}
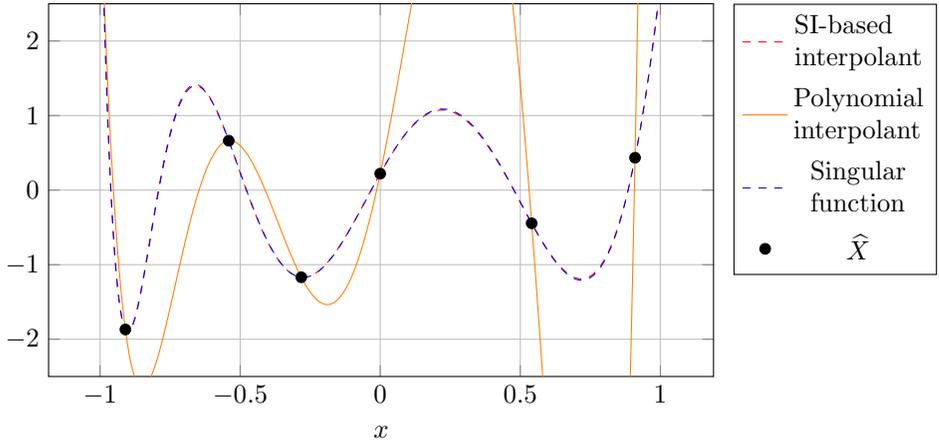

\begin{figure}[htbp]
    \centering
    \begin{tikzpicture}
        \begin{semilogyaxis}
            [
              xlabel={Eigenfunction},
              ylabel={Error},
              height=0.5\textwidth,
              width=0.7\textwidth,
              grid=major,
              legend entries={Interpolation error on $u_i$, $\epsilon/\sigma_i$},
              legend pos=north west,
            ]
            \addplot [red, mark=diamond] table [x=i,y=errShatui]{figs/errShatui.dat};
            \addplot [blue, mark=*] table [x=i,y=epsSigmai]{figs/errShatui.dat};
        \end{semilogyaxis}
    \end{tikzpicture}
    \caption{Interpolation error on the various eigenfunctions. The error grows just slowly enough with the eigenfunctions so that the overall interpolant is accurate up to the desired accuracy.}
    \label{fig:toy_err}
\end{figure}
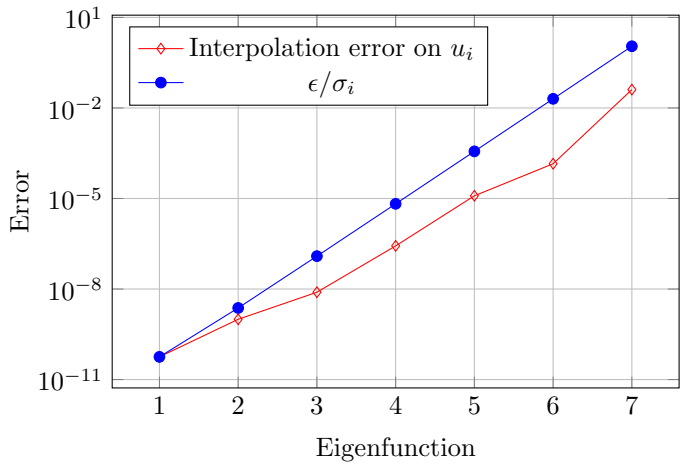

\section{Numerical experiments} \label{sec:exp}

In this section we present some numerical experiments on various geometries.
We study the quality (how far $r_1$ is from the optimal SVD-rank $r$ and how accurate the approximation is) of the algorithm in \autoref{sec:basic} and \autoref{sec:comp}. We illustrate in \autoref{sec:aca_rrqr} the improved guarantees of RRQR and justify the use of weights in \autoref{sec:weights}. Finally, \autoref{sec:time} studies the algorithm computational complexity.

The experiments are done using Julia \cite{bezanson2017julia} and the code is sequential. For the strong rank-revealing QR algorithm, we use the \texttt{LowRankApprox.jl} Julia package \cite{kenneth_l_ho_2018_1254148}. The code can be downloaded from \url{https://people.stanford.edu/lcambier/publications}.

\subsection{Simple geometries} \label{sec:basic}

We begin this section with an elementary problem, as depicted on \autoref{sfig:basic_1}. In this problem, we consider the usual kernel $\Kfun(x, y) = \|x - y\|_2^{-1}$ where $x, y \in \R^2$. $\Xspace$ and $\Yspace$ are two squares of side of length 1, centered at $(0.5,0.5)$ and $(2.5,2.5)$ respectively. Finally, $X$ and $Y$ are two uniform meshes of $50 \times 50$ mesh points each, i.e., $n = 2500$.

\revrev{
We pick the Chebyshev grids $\Xcheb$ and $\Ycheb$ using a heuristic based on the target accuracy $\varepsilon$. Namely, we pick the number of Chebyshev nodes in each dimension (i.e., $x_1$, $x_2$, $y_1$ and $y_2$) independently (by using the midpoint of $\Xspace$ and $\Yspace$ as reference), such that the interpolation error is approximately less than $\varepsilon^{3/4}$. This values is heuristic, but performs well for those geometries. Other techniques are possible. 
This choice is based in part on the observation that the algorithm is accurate even when $\delta > \varepsilon$, i.e., when the Chebyshev interpolation is \emph{less accurate} than the actual final low-rank approximation through Skeletonized Interpolation.
}

Consider then \autoref{sfig:basic_2}.
The $r_0$ line indicates the rank ($r_0 = \min(|\Xcheb|, |\Ycheb|)$) of the low-rank expansion through interpolation. The $r_1$ line corresponds to the rank obtained after the RRQR over $\Kfun(\Xcheb, \Ycheb)$ and its transpose, i.e., it is the rank of the final approximation
\[ \Kfun(X, Y) \approx \Kfun(X, \Ycheba) \Kfun(\Xcheba, \Ycheba)^{-1} \Kfun(\Xcheba, Y) \]
Finally, ``SVD rank'' is the rank one would obtain by truncating the SVD of \[\Kfun(X,Y)=USV^\top\] at the appropriate singular value, as to ensure \[ \|\Kfun(X, Y) - \tilde \Kfun(X, Y)\|_F \approx \epsilon \|\Kfun(X, Y)\|_F\] 
Similarly, ``RRQR'' is the rank a rank-revealing QR on $\Kfun(X, Y)$ would obtain. This is usually slightly suboptimal compared to the SVD. Those two values are there as to illustrate that $r_1$ is close to the optimal value.

The conclusion regarding \autoref{sfig:basic_2} is that Skeletonized Interpolation is nearly optimal in terms of rank. While the rank obtained by the interpolation is clearly far from optimal, the RRQR over $\Kfun(\Xcheb, \Ycheb)$ allows us to find subsets $\Xcheba \subset \Xcheb$ and $\Ycheba \subset \Ycheb$ that are enough to represent well $\Kfun(X, Y)$, and the final rank $r_1$ is nearly optimal compared to the SVD-rank $r$.  We also see that the rank \revrev{of} a blind RRQR over $\Kfun(X, Y)$ is higher than the SVD-rank and usually closer---if not identical---to $r_1$.

We want to re-emphasize that, in practice, \emph{the error of the sets $\Xcheb$, $\Ycheb$ ---i.e., the error of the polynomial interpolation based on $\Xcheb \times \Ycheb$--- can be larger than the required tolerance}. If they are large enough, the compressed sets $\Xcheba$, $\Ycheba$ will contain enough information so as to properly interpolate $\Kfun$ and the final error will be smaller than the required tolerance. This is important, as the size of the Chebyshev grid for a given tolerance can be fairly large (as indicated in the introduction, and one of the main motivation of this work), even though the final rank is small.

As a sanity check, \autoref{sfig:basic_3} gives the relative error measured in the Frobenius norm
\[ \frac{\|\Kfun(X, Y) - \Kfun(X, \Ycheba) \Kfun(\Xcheba, \Ycheba)^{-1} \Kfun(\Xcheba, Y)\|_F}{\|\Kfun(X, Y)\|_F} \]
between $\Kfun(X, Y)$ and its interpolation as a function of the tolerance $\epsilon$.\footnote{Choosing the Frobenius norm is not critical---very similar results are obtained in the 2-norm for instance.} We see that both lines are almost next to each other, meaning our approximation indeed reaches the required tolerance. This is important as it means that one can effectively \emph{control} the accuracy.

Finally, \autoref{sfig:basic_1} also shows the resulting $\Xcheba$ and $\Ycheba$. It is interesting to notice how the selected points cluster near the close corners, as one could expect since this is the area where the kernel is the least smooth.

\begin{figure}[htbp]
    \centering
    \subfloat[\label{sfig:basic_2}Ranks as a function of the desired accuracy]{
        \begin{tikzpicture}
        \begin{semilogxaxis}
        [
          xlabel={Tolerance},
          ylabel={Rank},
          height=0.4\textwidth,
          width=0.9\textwidth,
          grid=major,
          legend entries={$r_0$, $r_1$, RRQR, SVD},
          legend pos=north east,
          xtick={1e-12, 1e-9, 1e-6, 1e-3},
          xticklabels={$10^{-12}$, $10^{-9}$, $10^{-6}$, $10^{-3}$},
        ]
        \addplot table [x=tol,y=r0]{figs/exp1.dat};
        \addplot table [x=tol,y=r1]{figs/exp1.dat};
        \addplot table [x=tol,y=rRRQR]{figs/exp1.dat};
        \addplot table [x=tol,y=rSVD]{figs/exp1.dat};
        \end{semilogxaxis}
        \end{tikzpicture}
    } \\
    \subfloat[\label{sfig:basic_1}The geometry used, and the resulting choice of $\Xcheba$, $\Ycheba$ for a tolerance of $10^{-6}$.]{
        \begin{tikzpicture}
        \begin{axis}
        [
          xlabel={$x_1$},
          ylabel={$x_2$},
          height=0.4\textwidth,
          width=0.4\textwidth,
          legend entries={$X$, $Y$},
          legend pos=south east,
        ]
        \addplot [red, mark=square*, only marks] table [x=x1,y=x2]{figs/exp1-xy.dat};
        \addplot [blue, mark=diamond*, only marks] table [x=y1,y=y2]{figs/exp1-xy.dat};
        \draw (axis cs:0,0)--(axis cs:0,1)--(axis cs:1,1)--(axis cs:1,0)--(axis cs:0,0);
        \draw (axis cs:2,2)--(axis cs:2,3)--(axis cs:3,3)--(axis cs:3,2)--(axis cs:2,2);
        \end{axis}
        \end{tikzpicture}
    } \;\;
    \subfloat[\label{sfig:basic_3}Relative Frobenius-norm error as a function of the desired accuracy]{
        \begin{tikzpicture}
        \begin{loglogaxis}
        [
          xlabel={Tolerance},
          ylabel={Relative Frobenius error},
          height=0.4\textwidth,
          width=0.4\textwidth,
          grid=major,
          legend entries={Error, Tolerance},
          legend style={at={(1.3,0.4)}}
        ]
        \addplot table [x=tol,y=tol]{figs/exp1.dat};
        \addplot table [x=tol,y=err]{figs/exp1.dat};
        \end{loglogaxis}
        \end{tikzpicture}
    }
    \caption{Results for the 2D-squares example. The rank $r_0$ before compression is significantly reduced to $r_1$, very close to the true SVD or RRQR-rank.}
\end{figure}
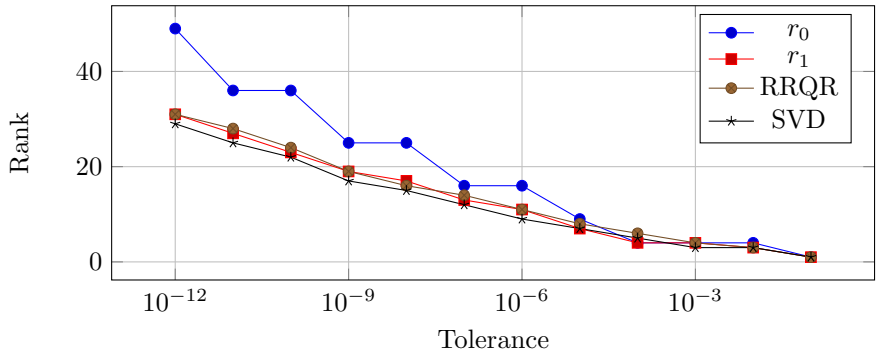
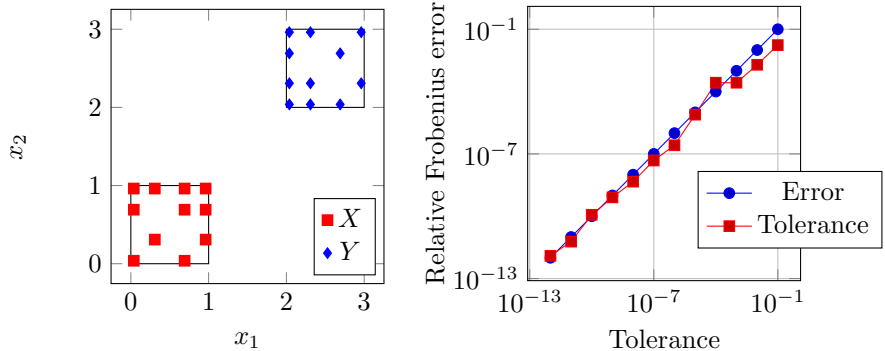

We then consider results for the same Laplacian kernel $\Kfun(x, y) = \|x-y\|_2^{-1}$ between two plates in 3D (\autoref{sfig:basic_4}).
We observe overall very similar results as for the previous case on \autoref{sfig:basic_5}, where the initial rank $r_0$ is significantly decreased to $r_1$ while keeping the resulting accuracy close to the required tolerance as \autoref{sfig:basic_6} shows.
Finally, one can see on \autoref{sfig:basic_4} the selected Chebyshev nodes. They again cluster in the areas where smoothness is the worst, i.e., at the closes edges of the plates.

\begin{figure}[htbp]
    \centering
    \subfloat[\label{sfig:basic_5}Ranks as a function of the desired accuracy]{
        \begin{tikzpicture}
        \begin{semilogxaxis}
        [
          xlabel={Tolerance},
          ylabel={Rank},
          height=0.4\textwidth,
          width=0.9\textwidth,
          legend entries={$r_0$, $r_1$, RRQR, SVD},
          legend pos=north east,
          grid=major,
          xtick={1e-12, 1e-9, 1e-6, 1e-3},
          xticklabels={$10^{-12}$, $10^{-9}$, $10^{-6}$, $10^{-3}$},
        ]
        \addplot table [x=tol,y=r0]{figs/exp2.dat};
        \addplot table [x=tol,y=r1]{figs/exp2.dat};
        \addplot table [x=tol,y=rRRQR]{figs/exp2.dat};
        \addplot table [x=tol,y=rSVD]{figs/exp2.dat};
        \end{semilogxaxis}
        \end{tikzpicture}
    } \\
    \subfloat[\label{sfig:basic_4}The geometry used, and the resulting choice of $\Xcheba$, $\Ycheba$ for a tolerance of $10^{-6}$.]{
        \begin{tikzpicture}
        \begin{axis}
        [
          xlabel={$x_1$},
          ylabel={$x_2$},
          zlabel={$x_3$},
          height=0.4\textwidth,
          width=0.4\textwidth,
          legend entries={$X$, $Y$},
          legend style={at={(1.3,0.4)}}
        ]
        \addplot3 [red, mark=square*, only marks] table [x=x1,y=x2,z=x3]{figs/exp2-xy.dat};
        \addplot3 [blue, mark=diamond*, only marks] table [x=y1,y=y2,z=y3]{figs/exp2-xy.dat};
        \draw (axis cs:-1,0,0)--(axis cs:-1,1,0)--(axis cs:-1,1,1.5)--(axis cs:-1,0,1.5)--(axis cs:-1,0,0);
        \draw (axis cs:0,0,-1)--(axis cs:0,1,-1)--(axis cs:1.5,1,-1)--(axis cs:1.5,0,-1)--(axis cs:0,0,-1);
        \end{axis}
        \end{tikzpicture}
    } \;\;
    \subfloat[\label{sfig:basic_6}Relative Frobenius-norm error as a function of the desired accuracy]{
        \begin{tikzpicture}
        \begin{loglogaxis}
        [
          xlabel={Tolerance},
          ylabel={Relative Frobenius error},
          height=0.4\textwidth,
          width=0.4\textwidth,
          grid=major,
          legend entries={Error, Tolerance},
                    legend style={at={(1.2,0.35)}}
        ]
        \addplot table [x=tol,y=tol]{figs/exp2.dat};
        \addplot table [x=tol,y=err]{figs/exp2.dat};
        \end{loglogaxis}
        \end{tikzpicture}
    }
    \caption{Results for the perpendicular plates example. The rank $r_0$ before compression is significantly reduced to $r_1$, very close to the true SVD or RRQR-rank.}
\end{figure}
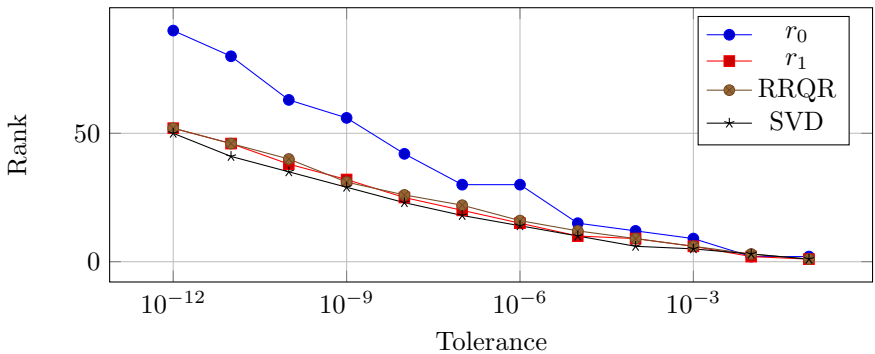
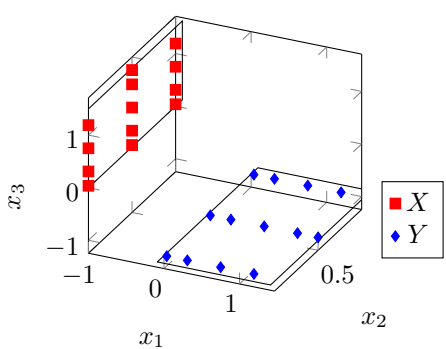
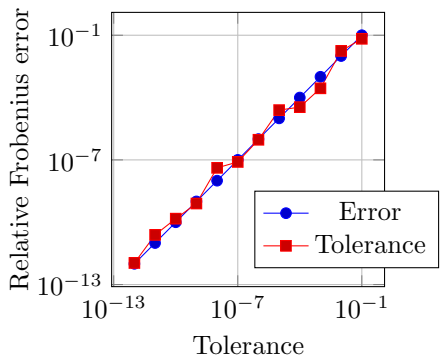

\subsection{Comparison with ACA and Random Sampling} \label{sec:comp}

We then compare our method with other standard algorithms for kernel matrix factorization. In particular, we compare it with ACA \cite{bebendorf2003adaptive} and 'Random CUR' where one selects, at random, pivots $\Xtilde$ and $\Ytilde$ and builds a factorization \[ \Kfun(X,Y) \approx \Kfun(X, \Ytilde) \Kfun(\Xtilde, \Ytilde)^{-1} \Kfun(\Xtilde, Y) \] based on those. As we are interested in comparing the \emph{quality} of the resulting sets of pivots for a given rank, we compare those algorithms for sets $X$ and $Y$ with variable distance between each other and for a fixed tolerance ($\epsilon = 10^{-8}$) and kernel ($\Kfun(x,y) = \|x-y\|_2^{-1}$). The geometry is two unit-length squares side-by-side with a variable distance between their closest edges.

The comparison is done in the following way:
\begin{enumerate}
    \item Given $r_1$, build the ACA factorization of rank $r_1$ and compute its relative error in Frobenius norm with $\Kfun(X, Y)$;
    \item Given $r_1$, build the random CUR factorization by sampling uniformly at random $r_1$ points from $X$ and $Y$ to build $\Xtilde$, $\Ytilde$. Then, compute its relative error with $\Kfun(X, Y)$.
\end{enumerate}
We then do so for sets of varying distance, and for a given distance, we repeat the experiment 25 times by building $X$ and $Y$ at random within the two squares. This allows to study the variance of the error and to collect some statistics.

\autoref{fig:comp} gives the resulting errors in relative Frobenius norm for the 3 algorithm using box-plots of the errors to show distributions. 
The rectangular boxes represent the distributions from the 25\% to the 75\% quantiles, with the median in the center. The thinner lines represent the complete distribution, except outliers depicted using large dots.
We observe that the ($\Xcheba$, $\Ycheba$) sets based on Chebyshev-SI are, \emph{for a common size $r_1$}, \emph{more accurate} than the Random or ACA sets. In addition, by design, they lead to more stable factorizations (as they have very small variance in terms of accuracy) while ACA for instance has a higher variance. We also see, as one may expect, that while ACA is still fairly stable even when the clusters get close, random CUR starts having higher and higher variance. This is understandable as the kernel gets less and less smooth as the clusters get close.

Finally, we ran the same experiments with several other kernels (${r^{-2}}$, ${r^{-3}}$, $\log(r)$, $\exp(-r)$, $\exp(-r^2)$) and observed quantitatively very similar results.

\begin{figure}[htbp]
    \centering
    \includegraphics[width=\textwidth]{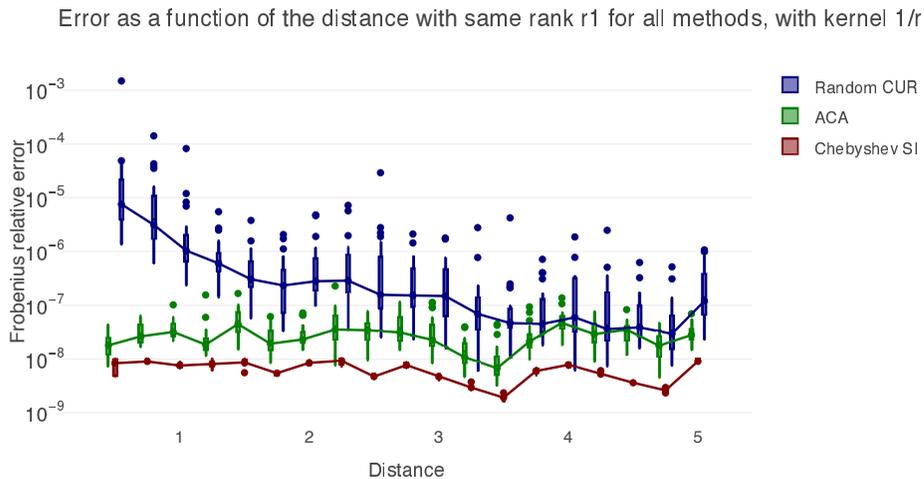}
    \caption{Comparison between different algorithms: Chebyshev-based SI, ACA and purely random CUR decomposition. We consider two 2D squares of sides 1 with a variable distance from each other; for each distance, we run Chebyshev-based SI and find the smallest sets $\Xcheb, \Ycheb$ of rank $r_0$ leading to a factorization using $\Xcheba, \Ycheba$ of sizes $r_1$ with relative error at most $10^{-8}$. Then $r_1$ is used as an a priori rank for ACA and Random CUR. We randomize the experiments by subsampling 500 points from a large $100 \times 100$ points grid in each square.}
    \label{fig:comp}
\end{figure}

\subsection{Stability guarantees provided by RRQR} \label{sec:aca_rrqr}
In \autoref{algo:ai}, in principle, any rank-revealing factorization providing pivots could be used. In particular, ACA itself could be used. In this case, this is the HCAII (without the weights) algorithm as described in \cite{borm2005hybrid}. However, ACA is only a heuristic: unlike strong rank-revealing factorizations, it can't always reveal the rank.  In particular, it may have issues when some parts of $X$ and $Y$ have strong interactions while others are weakly coupled. To highlight this, consider the following example. It can be extended to many other situations.

Let us use the rapidly decaying kernel
\[ \Kfun(x, y) = \frac{1}{\|x-y\|_2^3} \]
and the situation depicted in \autoref{geo:guarantees} with $X = \begin{bmatrix} X_1 & X_2 \end{bmatrix}$ and $Y = \begin{bmatrix} Y_1 & Y_2 \end{bmatrix}$.
\revrev{We note that, formally, $X$ and $Y$ are not well-separated.}

Since $\Kfun$ is rapidly decaying and $X_1$/$Y_2$ (resp.\ $X_2$/$Y_1$) are far away, the resulting matrix is \emph{nearly} block diagonal, i.e.,
\begin{equation} \label{eq:fail} \Kfun(X, Y) \approx \begin{bmatrix} \Kfun(X_1, Y_1) & \OO{\varepsilon} \\ \OO{\varepsilon} & \Kfun(X_2, Y_2) \end{bmatrix} \end{equation}
for some small $\varepsilon$. This is a challenging situation for ACA since it will need to sweep through the initial block completely before considering the other one. In practice heuristics can help alleviate the issue; see ACA+~\cite{borm2005hybrid} for instance. Those heuristic, however, do not come with any guarantees. Strong RRQR, on the other hand, does not suffer from this and picks optimal nodes in each cluster from the start. It guarantees stability and convergence.

\begin{figure}
\centering
    \subfloat[The geometry. $L_y = 0.9 L_x$, each domain $X_i, Y_i$ has 50 points uniformly distributed (hence, the weights are uniform) on an arc of angle $\pi/4$. $\Xcheb = X$ and $\Ycheb = Y$, and $\Xcheba$, $\Ycheba$ are $r_1$ points subsampled from $\Xcheb, \Ycheb$ using \autoref{algo:ai}.]{
\raisebox{0.5cm}{\begin{tikzpicture}
    \draw [line width=2] (2,0) arc (-20:20:2.5cm);
    \draw (2.5,0.8) node{$X_1$};
    \draw [line width=2](1.7,0.1) arc (-20:20:2cm);
    \draw (1.4,0.8) node{$Y_1$};
    \draw [line width=2](-2,0) arc (20:-20:-2.5cm);
    \draw (-2.5,0.8) node{$X_2$};
    \draw [line width=2](-1.7,0.1) arc (20:-20:-2cm);
    \draw (-1.4,0.8) node{$Y_2$};
    \draw[<->] (-1.7,-0.1) -- (1.7,-0.1);
    \draw[<->] (-2,1.9) -- (2,1.9);
    \draw (0,2.1) node{$L_x$};
    \draw (0,-0.4) node{$L_y$};
\end{tikzpicture}}
} \quad
\subfloat[Relative Frobenius error over $X \times Y$ using both RRQR and ACA to select $\Xcheba, \Ycheba$ of size $r_1$ from $\Xcheb$, $\Ycheb$ using \autoref{algo:ai}.] {
\begin{tikzpicture}
\begin{semilogyaxis}
[
  xlabel={$r_1 = |\Xcheba| = |\Ycheba|$},
  ylabel={Error},
  height=0.3\textwidth,
  width=0.5\textwidth,
  legend entries={RRQR, ACA},
  legend pos=south west,
]
\addplot table [x=rank,y=err_rrqr]{figs/rrqr_vs_aca.dat};
\addplot table [x=rank,y=err_aca]{figs/rrqr_vs_aca.dat};
\end{semilogyaxis}
\end{tikzpicture}
}
\caption{Failure of ACA. The geometry is such that the coupling between $X_1$/$Y_1$ and $X_2$/$Y_2$ is much stronger than between $X_1$/$Y_2$ and $X_2$/$Y_1$. This leads to ACA not selecting pivots properly. RRQR on the other hand has no issue and converges steadily.}
\label{geo:guarantees}
\end{figure}
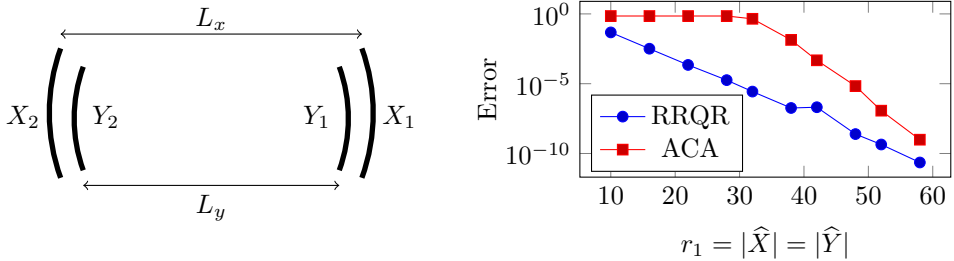

\subsection{The need for weights} \label{sec:weights}

Another characteristic of \autoref{algo:ai} is the presence of weights. We illustrate here why this is necessary in general. \autoref{algo:ai} uses $\Xcheb$ and $\Ycheb$ both to select interpolation points (the ``columns" of the RRQR) and to evaluate the resulting error (the ``rows"). Hence, a non-uniform distribution of points leads to over- or under-estimated $L_2$ error and to a biased interpolation point selection. The weights, roughly equal to the (square-root of) the inverse points density, alleviate this effect. This is a somewhat small effect in the case of Chebyshev nodes \& weights as the weights have limited amplitudes.

To illustrate this phenomenon more dramatically, consider the situation depicted on \autoref{fig:weights_no_weights}. We define $\Xcheb$ and $\Ycheb$ in the following way. Align two segments of $N$ points, separated by a small interval of length $\epsilon = 1/N$ \revrev{with $N = 200$}. At the close extremities we insert $25 N$ additional points inside small 3D spheres of diameter $\epsilon$. As a result, $|\Xcheb| = |\Ycheb| = 26N = 5,200$, and $\Xcheb$, $\Ycheb$ are strongly non-uniform. We see that the small spheres hold a large number of points in an interval of length $N^{-1}$. As a result, their associated weight should be proportional to $N^{-1/2}$, while the weight for the points on the segments should be proportional to $1$. Then we apply \autoref{algo:ai} with and without weights, and evaluate the error on the segments using $|X| = |Y| = 10,000$ equispaced points as a proxy for the $L_2$ error.

When using a rank $r_0 = 200$, the CUR decomposition picks only 6 more points on the segments (outside the spheres) for the weighted case compared to the unweighted. However, this is enough to dramatically improve the accuracy, as \autoref{fig:r1_w_no_w} shows. Overall, the presence of weights has a large effect, and this shows that in general, one should appropriately weigh the node matrix $K_w$ to ensure maximum accuracy.

\begin{figure}
\centering
\subfloat[The geometry. Kernel is $1/r$, $\epsilon =0.01$.]{
    \raisebox{1cm}{\begin{tikzpicture}
    \pgfmathsetmacro{\l}{0.3}
    \pgfmathsetmacro{\d}{0.6}
    \draw (0,0) circle (\l);
    \draw (\l,0) -- (5*\l,0);
    \draw (-4*\l,0) circle (\l);
    \draw (-5*\l,0) -- (-9*\l,0);
    \draw [<->] (-5*\l,-\d) -- (-3*\l,-\d);
    \draw [<->] (-3*\l,-\d) -- (-\l,-\d);
    \draw [<->] (-\l,-\d) -- (\l,-\d);
    \draw (-4*\l,-1.4*\d) node {$\epsilon$};
    \draw (-2*\l,-1.4*\d) node {$\epsilon$};
    \draw (0,-1.4*\d) node {$\epsilon$};
    \draw [<->] (-9*\l,\d) -- (-3*\l,\d);
    \draw [<->] (-1*\l,\d) -- (5*\l,\d);
    \draw (-6*\l,1.4*\d) node {$1$};
    \draw (2*\l,1.4*\d) node {$1$};
    \draw (-7*\l, -0.7*\d) node {$\Xspace$};
    \draw (3*\l, -0.7*\d) node {$\Yspace$};
\end{tikzpicture}}
} \;
\subfloat[Error with and without weights in \autoref{algo:ai}.\label{fig:r1_w_no_w}] {
\begin{tikzpicture}
\begin{semilogyaxis}
[
  xlabel={$r_1 = |\Xcheba| = |\Ycheba|$},
  ylabel={Error},
  height=0.3\textwidth,
  width=0.45\textwidth,
  ymax=1000,
]
\addplot [red,mark=diamond*,mark size=1pt] table [x=rank, y=no_weights] {figs/weights_no_weights.dat};
\addplot [blue,mark=square*,mark size=0.5pt] table [x=rank, y=with_weights] {figs/weights_no_weights.dat};
\node at (5,3) [anchor=west] {\textcolor{red}{No weights}};
\node at (5,-14) [anchor=west] {\textcolor{blue}{With weights}};
\end{semilogyaxis}
\end{tikzpicture}
}
\caption{Benchmark demonstrating the importance of using weights in the RRQR factorization. The setup for the benchmark is described in the text. The blue curve on the right panel, which uses weights, has much improved accuracy.}
\label{fig:weights_no_weights}
\end{figure}
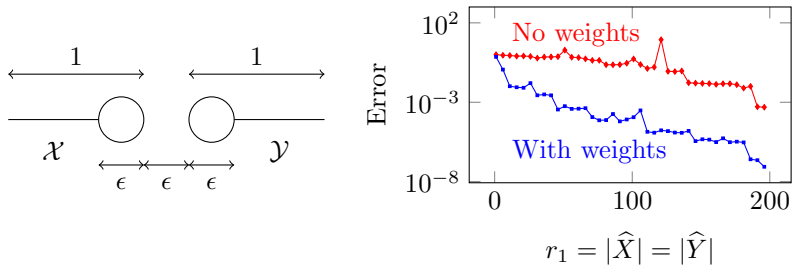

\subsection{Computational complexity} \label{sec:time}

We finally study the computational complexity of the algorithm.
It's important to note that two kinds of operations are involved: kernel evaluations and classical flops. As they may potentially differ in cost, we keep those separated in the following analysis.

The cost of the various parts of the algorithm is the following :
\begin{itemize}
    \item $\OO{r_0^2}$ kernel evaluations for the interpolation, i.e., the construction of $\Xcheb$ and $\Ycheb$ and the construction of $\Kfun(\Xcheb, \Ycheb)$
    \item $\OO{r_0^2 r_1}$ flops for the RRQR over $\Kfun(\Xcheb, \Ycheb)$ and $\Kfun(\Xcheb, \Ycheb)^\top$
    \item $\OO{(m + n)r_1}$ kernel evaluations for computing $\Kfun(X, \Ycheba)$ and $\Kfun(\Xcheba, Y)$, respectively (with $m = |X|$ and $n = |Y|$)
    \item $\OO{r_1^3}$ flops for $\Kfun(\Xcheba, \Ycheba)^{-1}$ (through, say, an LU factorization)
\end{itemize}
So the total complexity of building the three factor is $\OO{(m + n) r_1}$ kernel evaluations. If $m = n$ and $r_1 \approx r$, the total complexity is
\[ \OO{(m + n) r_1} \approx \OO{n r} \]
Also note that the memory requirements are, clearly, of order $\OO{(m+n)r_1}$.

When applying this low-rank matrix on a given input vector $f(Y) \in \R^n$, the cost is
\begin{itemize}
    \item $\OO{r_1 n}$ flops for computing $w_1 = \Kfun(\Xcheba, Y) f(Y)$
    \item $\OO{r_1^2}$ flops for computing $w_2 = \Kfun(\Xcheba, \Ycheba)^{-1} w_1$ assuming a factorization of \\$\Kfun(\Xcheba, \Ycheba)$ has already been computed
    \item $\OO{m r_1}$ flops for computing $w_3 = \Kfun(X, \Ycheba) w_2$
\end{itemize}
So the total cost is
\[ \OO{(m+n)r_1} \approx \OO{n r} \]
flops if $m = n$ and $r_1 \approx r$.

To illustrate those results, \autoref{sfig:complexity_1} shows, using the same setup as in the 2D square example of \autoref{sec:basic}, the time (in seconds) taken by our algorithm versus the time taken by a naive algorithm that would first build $\Kfun(X, Y)$ and then perform a rank-revealing QR on it. Time is given as a function of $n$ for a fixed accuracy $\epsilon = 10^{-8}$.
One should not focus on the absolute values of the timing but rather the asymptotic complexities. In this case, the $\OO{n}$ and $\OO{n^2}$ complexities clearly appear, and our algorithm scales much better than the naive one (or, really, that any algorithm that requires building the full matrix first).
Note that we observe no loss of accuracy as $n$ grows. Also note that the plateau at the beginning of the Skeletonized Interpolation curve is all the overhead involved in selecting the Chebyshev points $\Xcheb$ and $\Ycheb$ using some heuristic. This is very implementation-dependent and could be reduced significantly with a better or more problem-tailored algorithm. However, since this is by design independent of $X$ and $Y$ (and, hence, $n$) it does not affect the asymptotic complexity.

\autoref{sfig:complexity_2} shows the time as a function of the desired accuracy $\epsilon$, for a fixed number of mesh points $n = 10^5$. Since the singular values of $\Kfun(X, Y)$ decay exponentially, one has $r \approx \OO{\log\left(\frac 1\epsilon\right)}$. The complexity of the algorithm being $\OO{nr}$, we expect the time to be proportional to $\log\left(\frac 1\epsilon\right)$. This is indeed what we observe.

\autoref{sfig:complexity_3} depicts the time as a function of the rank $r$ for a fixed accuracy $\epsilon = 10^{-8}$ and number of mesh points $n = 10^5$. In that case, to vary the rank and keep $\epsilon$ fixed, we change the geometry and observe the resulting rank. This is done by moving the top-right square (see \autoref{sfig:basic_1}) towards the bottom-left one (keeping approximately one cluster diameter between them) or away from it (up to 6 diameters). The rank displayed is the rank obtained by the factorization. As expected, the algorithm scales linearly as a function of $r$.

\begin{figure}[htbp]
    \centering
    \subfloat[\label{sfig:complexity_1}Time as a function of $n$ for a fixed tolerance. The plateau is the overhead in the Skeletonized Interpolation algorithm that is independent of $n$.]{
    \begin{tikzpicture}
        \begin{loglogaxis}
        [
          xlabel={$n = |X| = |Y|$},
          ylabel={Time [s.]},
          height=0.45\textwidth,
          width=0.9\textwidth,
          grid=major,
          legend entries={Naive algorithm, SI, $\OO{n^2}$, $\OO{n}$},
          legend pos=north west,
        ]
        \addplot [red, mark=diamond*] coordinates {(10, 2e-4) (30, 4e-4) (90, 2e-3) (250, 1.5e-2) (600, 1e-1)   (1800, 1)};
        \addplot [blue, mark=square*] coordinates {(10, 3e-3) (30, 4e-3) (90, 5e-3) (250, 6e-3)   (600, 1.2e-2) (1800, 3e-2) (4500, 8e-2) (16000, 2e-1) (40000, 8e-1) (100000, 1.1)};
        \addplot [domain=10:1e5, black] {x^2/1e5};
        \addplot [domain=10:1e5, dashed, black] {x/3e+5};
        \addplot [domain=2000:1e5, red, dashed] {x^2/2.9e6};
        \end{loglogaxis}
    \end{tikzpicture} 
    }  \\
    \subfloat[\label{sfig:complexity_2}Time as a function of $\epsilon$, for fixed clusters of points.]{
    \begin{tikzpicture}
        \begin{loglogaxis}
        [
          xlabel={Tolerance},
          ylabel={Time [s.]},
          height=0.45\textwidth,
          width=0.45\textwidth,
          grid=major,
          legend entries={SI, $\log(1/\varepsilon)$}
        ]
        \addplot coordinates {(1e-6, 1.8) (1e-5, 0.9) (1e-4, 0.7) (1e-3,0.47) (1e-2,0.47) (1e-1, 0.1)};
        \addplot [domain=1e-6:1e-1] {-log10(x)/4.3};
        \end{loglogaxis}
    \end{tikzpicture}
    } \,\,
    \subfloat[\label{sfig:complexity_3}Time as a function of $r$. The rank is varied by increasing the distance between the two clusters, for a fixed tolerance and number of points.]{
    \begin{tikzpicture}
        \begin{axis}
        [
          xlabel={Rank},
          ylabel={Time [s.]},
          height=0.45\textwidth,
          width=0.45\textwidth,
          grid=major,
          ytick={0.6, 1, 1.5},
        ]
        \addplot coordinates {(6,0.65) (6,0.61) (7, 0.78) (8, 0.9) (9,1.11) (13,1.55)};
        \end{axis}
    \end{tikzpicture}
    }
    \caption{Timings experiments on Skeletonized Interpolation}
    \label{fig:complexity}
\end{figure}
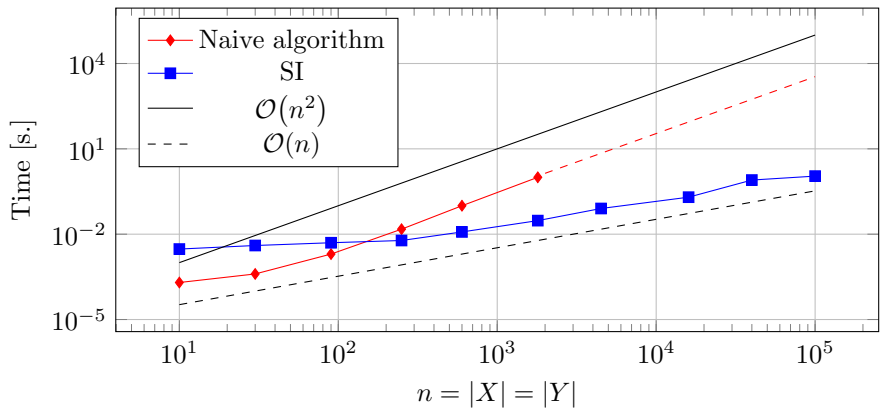
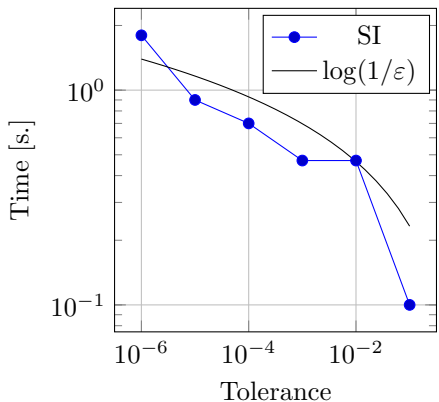
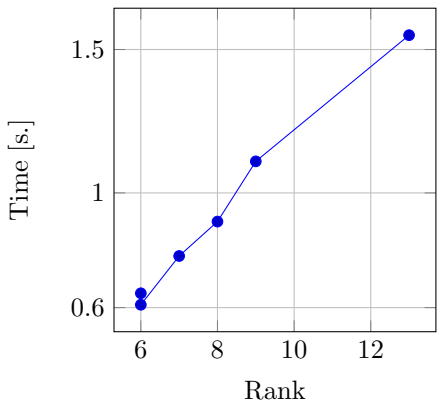

\section{Conclusion}

In this work, we built a kernel matrix low-rank approximation based on Skeletonized interpolation. This can be seen as an optimal way to interpolate families of functions using a custom basis.

This type of interpolation, by design, is always at least as good as polynomial interpolation as it always requires the minimal number of basis functions for a given approximation error.
We proved in this paper the asymptotic convergences of the scheme for kernels exhibiting fast (i.e., faster than polynomial) decay of singular values. We also proved the numerical stability of general Schur-complement types of formulas when using a backward stable algorithm.

In practice, the algorithm exhibits a low computational complexity of $\OO{n r}$ with small constants and is very simple to use. 
Furthermore, the accuracy can be set a priori and in practice, we observe nearly optimal convergence of the algorithm. 
Finally, the algorithm is completely insensible to the mesh point distribution, leading to more stable sets of ``pivots'' than Random Sampling or ACA.

\section*{Acknowledgements}
We would like to thank Cleve Ashcraft for his ideas and comments on the paper, as well as the anonymous reviewer for his careful reading and pertinent suggestions that greatly improved the paper.

\appendix

\section{Proofs of the theorems}

\begin{proof}[\autoref{lemma:1}]
This bound on the Lagrange basis is a classical result related to the growth of the Lebesgue constant in polynomial interpolation. For $\nx$ Chebyshev nodes of the first kind on $[-1,1]$ and the associated Lagrange basis functions $\ell_1, \dots, \ell_\nx$ we have the following result \cite[equation 13]{ibrahimoglu2016lebesgue}
        \[ \max_{x \in [-1,1]} \sum_{i=1}^\nx |\ell_i(x)| \leq \frac 2\pi \log(\nx+1) + 0.974 = \OO{\log(\nx)} \]
        This implies that in one dimension,
        \[ \| S(x, \Xcheb) \|_2 \leq \| S(x, \Xcheb) \|_1 = \OO{\log \nx} \]
        Going from one to $d$ dimensions can be done using Kronecker products. Indeed, for $x \in \R^d$,
        \[ S(x, \Xcheb) = S(x_1, \Xcheb_1) \otimes \cdots \otimes S(x_d, \Xcheb_d) \]
        where $x = (x_1, \dots, x_d)$ and $\Xcheb_1, \dots, \Xcheb_d$ are the one-dimensional Chebyshev nodes.
        Since for all $a \in \R^m, b \in \R^n$, $\|a \otimes b\|_2 = \sqrt{\sum_{i,j} (a_i b_j)^2} = \|a b^\top\|_F = \|a\|_2 \|b\|_2$, it follows that
        \[ \| S(x, \Xcheb) \|_2 = \| S(x_1, \Xcheb_1) \otimes \cdots \otimes S(x_d, \Xcheb_d) \|_2 = \prod_{i=1}^d \| S(x_i, \Xcheb_i) \|_2 = \prod_{i=1}^d \OO{\log \nx_i} \]
        This implies, using a fairly loose bound,
        \[ \| S(x, \Xcheb) \|_2 = \OO{ \log(|\Xcheb|)^d} \]
        The same argument can be done for $T(y, \Ycheb)$.

In 1D, the weights are
    \[ w_k = \frac{\pi}{\nx} \sin \left( \frac{2k-1}{2\nx} \pi \right) \]
        for $k = 1, \dots, \nx$. Obviously, $w_k > 0$.
        Clearly, $w_k < \frac{\pi}{\nx}$. Also, the minimum being reached at $k = 1$ or $k = \nx$,
        \[ w_k \geq \frac \pi \nx \sin\left(\frac{\pi}{2\nx}\right) > \frac \pi \nx \frac{2 \pi}{2 \pi \nx} = \frac{\pi}{\nx^2} \]
        Since the nodes in $d$ dimensions are products of the nodes in 1D, it follows that
\begin{gather*}
\| \DWx \|_2 \leq \frac{\pi^d}{\nx} \\
\| \DWx^{-1} \|_2 \leq \frac{\nx^2}{\pi^d}
\end{gather*}
The result follows.
\end{proof}

    \begin{proof}[\autoref{lemma:4}]
        We show the result for the second equation. This requires using, consecutively, the interpolation result and the CUR decomposition one. First, one can write from \autoref{lemma:1} and the interpolation,
        \[ \Kfun(\Xcheba, \Ycheba)^{-1} \Kfun(\Xcheba, y) = \Kfun(\Xcheba, \Ycheba)^{-1} \left[ \Kfun(\Xcheba, \Ycheb) T(y, \Ycheb)^\top + \Eint(\Xcheba, y) \right] \]
        Then, introducing the weight matrices and applying \autoref{lemma:3} on the interpolation matrix,
            \begin{align*} & \Kfun(\Xcheba, \Ycheb) = \\
                & = \DWxa^{-1/2} \DWxa^{1/2} \Kfun(\Xcheba, \Ycheb) \DWy^{1/2} \DWy^{-1/2} \\
                & = \DWxa^{-1/2} \Big\{ \DWxa^{1/2} \Kfun(\Xcheba, \Ycheba) \DWya^{1/2} \begin{bmatrix} I & \widecheck T^\top \end{bmatrix} \\
                & \hspace{5em} + \Eqr(\Xcheba, \Ycheb) \Big\} \DWy^{-1/2}
                \end{align*}
        Finally, combining and distributing all the factors gives us
                \begin{align*} \Kfun(\Xcheba, \Ycheba)^{-1} \Kfun(\Xcheba, y)
                    & = \DWya^{1/2}
                \begin{bmatrix} I
                & \widecheck T^\top
                \end{bmatrix}
                \DWy^{-1/2} T(y, \Ycheb)^\top \\
                & \hspace{-3em} + \Kfun(\Xcheba, \Ycheba)^{-1} \DWxa^{-1/2} \Eqr(\Xcheba, \Ycheb) \DWy^{-1/2} T(y, \Ycheb)^\top \\
            & \hspace{-3em} + \Kfun(\Xcheba, \Ycheba)^{-1} \Eint(\Xcheba, y)
            \end{align*}
        Here, we can bound all terms:
        \begin{itemize}
            \item For the first term, \autoref{lemma:1} and \autoref{lemma:3} show that the expression is bounded by a polynomial;
            \item For the second term use the fact that
                \[ \| \Kwhat^{-1} \|_2 \leq p^2(r_0,r_1) \frac 1\epsilon \Rightarrow \| \Kfun(\Xcheba, \Ycheba)^{-1} \|_2 = p'(r_0,r_1) \frac 1\epsilon \]
                hence, since $\| E_{QR}(\Xcheb,\Ycheb) \|_2 = \epsilon$, the product is again bounded by a polynomial since the $\epsilon$ cancel out ;
            \item The last term can be bounded in a similar way using 
                \[ \Eint(x,y) = \OO{\delta} \revrev{ \leq } \OO{\epsilon} \]
        \end{itemize}
        We conclude that there exists a polynomial $q$ such that
        \[ \| \Kfun(\Xcheba, \Ycheba)^{-1} \Kfun(\Xcheba, y) \|_2 = \OO{q(r_0, r_1)} \]
        The proof is similar in $x$.
    \end{proof}

    \begin{proof}[\autoref{thm:ai}]
    Combining interpolation and CUR decomposition results one can write
        \begin{align*}
             & \Kfun(x, y) = S(x, \Xcheb) \Kfun(\Xcheb, \Ycheb) T(y, \Ycheb)^\top + \Eint(x,y) \\
             & = S_w(x, \Xcheb) \Kfun_w(\Xcheb, \Ycheb) T_w(y, \Ycheb)^\top + \Eint(x,y) \\
             & = S_w(x, \Xcheb) \left[ \begin{bmatrix} I \\ \widecheck S \end{bmatrix} \Kfun_w(\Xcheba, \Ycheba) \begin{bmatrix} I & \widecheck T^\top \end{bmatrix} + \Eqr(\Xcheb, \Ycheb) \right] T_w(y, \Ycheb)^\top + \Eint(x,y) \\
             & = S_w(x, \Xcheb) \begin{bmatrix} \Kfun_w(\Xcheba, \Ycheba) \\ \widecheck S \Kfun_w(\Xcheba, \Ycheba) \end{bmatrix} \Kfun_w(\Xcheba, \Ycheba)^{-1} \begin{bmatrix} \Kfun_w(\Xcheba, \Ycheba) & \Kfun_w(\Xcheba, \Ycheba) \widecheck T^\top \end{bmatrix} T_w(y, \Ycheb)^\top \\
             & \quad + S_w(x, \Xcheb) \Eqr(\Xcheb, \Ycheb) T_w(y, \Ycheb)^\top + \Eint(x,y) \\
             & = S_w(x, \Xcheb) \left[ \Kfun_w(\Xcheb, \Ycheba) + \Eqr(\Xcheb, \Ycheba) \right]
             \Kfun_w(\Xcheba, \Ycheba)^{-1} \left[ \Kfun_w(\Xcheba, \Ycheb) + \Eqr(\Xcheba, \Ycheb) \right] \\
             & \quad \times T_w(y, \Ycheb)^\top
                 + S_w(x, \Xcheb) \Eqr(\Xcheb, \Ycheb) T_w(y, \Ycheb)^\top + \Eint(x,y) \\
             & = (\Kfun(x, \Ycheba) + \Eint(x, \Ycheba)) \Kfun(\Xcheba, \Ycheba)^{-1} (\Kfun(\Xcheba, y) + \Eint(\Xcheba, y)) \\
                 & \quad + S_w(x, \Xcheb) \Eqr(\Xcheb, \Ycheba) \Kfun_w(\Xcheba, \Ycheba)^{-1} \Kfun_w(\Xcheba, \Ycheb) T_w(y, \Ycheb)^\top \\
                 & \quad + S_w(x, \Xcheb) \Kfun_w(\Xcheb, \Ycheba) \Kfun_w(\Xcheba, \Ycheba)^{-1} \Eqr(\Xcheba, \Ycheb) T_w(y, \Ycheb)^\top \\
                 & \quad + S_w(x, \Xcheb) \Eqr(\Xcheb, \Ycheba) \Kfun_w(\Xcheba, \Ycheba)^{-1} \Eqr(\Xcheba, \Ycheb) T_w(y, \Ycheb)^\top \\
             & \quad + S_w(x, \Xcheb) \Eqr(\Xcheb, \Ycheb) T_w(y, \Ycheb)^\top + \Eint(x,y)
        \end{align*}
Distributing everything, factoring the weights matrices and simplifying, we obtain the following, where we indicate the bounds on each term on the right,
        \begin{align*}
& \Kfun(x, y) = \Kfun(x, \Ycheba) \Kfun(\Xcheba, \Ycheba)^{-1} \Kfun(\Xcheba, y) & \text{Approximation} \\
& \quad + \Eint(x, \Ycheba) \Kfun(\Xcheba, \Ycheba)^{-1} \Kfun(\Xcheba, y) & \OO{\delta q(r_0, r_1)} \\
& \quad + \Kfun(x, \Ycheba) \Kfun(\Xcheba, \Ycheba)^{-1} \Eint(\Xcheba, y) & \OO{\delta q(r_0, r_1)} \\
& \quad + \Eint(x, \Ycheba) \Kfun(\Xcheba, \Ycheba)^{-1} \Eint(\Xcheba, y) & \OO{\delta p'(r_0, r_1)} \\
& \quad + S(x, \Xcheb) \DWx^{-1/2} \Eqr(\Xcheb, \Ycheba) \DWya^{-1/2} & \\
& \qquad \Kfun(\Xcheba, \Ycheba)^{-1} \Kfun(\Xcheba, \Ycheb) T(y, \Ycheb)^\top & \OO{\epsilon (\log r_0)^{2d} q(r_0, r_1) r_0^2} \\
& \quad + S(x, \Xcheb) \Kfun(\Xcheb, \Ycheba) \Kfun(\Xcheba, \Ycheba)^{-1} \DWxa^{-1/2} & \\
& \qquad \Eqr(\Xcheba, \Ycheb) \DWy^{-1/2} T(y, \Ycheb)^\top & \OO{\epsilon (\log r_0)^{2d} q(r_0, r_1) r_0^2}\\
& \quad + S(x, \Xcheb) \DWx^{-1/2} \Eqr(\Xcheb, \Ycheba) & \\
& \qquad \DWya^{-1/2} \Kfun(\Xcheba, \Ycheba)^{-1} \DWxa^{-1/2} & \\
& \qquad \Eqr(\Xcheba, \Ycheb) \DWy^{-1/2} T(y, \Ycheb)^\top & \OO{ \epsilon (\log r_0)^{2d} r_0^2 p(r_0,r_1) }\\
& \quad + S(x, \Xcheb) \DWx^{-1/2} \Eqr(\Xcheb, \Ycheb) & \\
& \qquad \DWy^{-1/2} T(y, \Ycheb)^\top & \OO{\epsilon (\log r_0)^{2d} r_0^2} \\
& \quad + \Eint(x,y) & \OO{\delta}
\end{align*}
This concludes the proof.
\end{proof}

\bibliography{biblio}
\bibliographystyle{siamplain}

\end{document}